\newtheorem{proposition}[equation]{Proposition}
\newtheorem{theorem}[equation]{Theorem}
\newtheorem{corollary}[equation]{Corollary}
\newtheorem{lemma}[equation]{Lemma}
\theoremstyle{definition}
\newtheorem{definition}[equation]{Definition}
\newtheorem{remark}[equation]{Remark}
\numberwithin{equation}{section}
\newcommand{\R} {\mathbb{R}}   \newcommand{\N} {\mathbb{N}}
\DeclareMathOperator{\vol}{vol}
\renewcommand{\emptyset}{1}
\begin{document}
\bibliographystyle{plain} \title[Orbit counting in conjugacy classes]
{Orbit counting in conjugacy classes for free groups acting on trees}

\author{George Kenison}
\address{Mathematics Institute, University of Warwick,
Coventry CV4 7AL, U.K.}
\email{G.Kenison@warwick.ac.uk}
\author{Richard Sharp} 
\address{Mathematics Institute, University of Warwick,
Coventry CV4 7AL, U.K.}
\email{R.J.Sharp@warwick.ac.uk}


\keywords{}


\maketitle

%
%
%
%
%
%
%
%
%
\begin{abstract}
In this paper we study the action of the fundamental group of a finite metric graph on its universal covering tree.  We assume the graph is finite, connected and the degree of each vertex is at least three.  Further, we assume an irrationality condition on the edge lengths.  We obtain an asymptotic for the number of elements in a fixed conjugacy class for which the associated displacement of a  given base vertex in the universal covering tree is at most \(T\).
Under a mild extra assumption we also obtain a polynomial error term.
\end{abstract}

\maketitle

%
%
%
%
%
%
%
%
%

\section{Introduction}\label{in}
Let \(G\) be a finite connected graph. We always assume that each vertex of $G$ has degree
at least $3$, in which case the fundamental group of $G$ is a free group $F$ on $k \geq 2$ generators.
We make $G$ into a metric graph by assigning to each edge $e$ a positive real length $l(e)$.
The length of a path in \(G\) is given by the sum of the lengths of its edges.  
We assume the set of closed geodesics in \(G\) (i.e.\ closed paths without backtracking) has lengths not contained in a discrete subgroup of \(\mathbb R\).

The universal cover of \(G\) is an infinite tree \(\mathcal{T}\) and the metric on $G$ lifts to a metric $d_{\mathcal T}$ on $\mathcal T$.
We consider each edge in \(\mathcal{T}\) as an isometric copy of a real interval.  Then the ball of radius \(T\) centred at \(o\in \mathcal{T}\) is the set
	\begin{equation*}
		B(o,T) = \{y\in\mathcal{T} \colon d_\mathcal{T}(o,y) < T\}.
	\end{equation*}
The \textit{volume} of \(B(o,T)\) is the sum of the metric edge lengths in \(B(o,T)\).
  Let \(h>0\) denote the \textit{volume entropy} of \(\mathcal{T}\) given by
	\begin{equation*}
		\lim_{T\to\infty} \frac{1}{T} \log \vol( B(o,T)).
	\end{equation*}
We note the volume entropy is independent of the choice of \(o\in \mathcal{T}\).

Let \(x\in F\) and fix a base vertex \(o\in\mathcal{T}\).  We define \(L: F\to \mathbb R^+\) by \(L(x) = d_\mathcal{T}(o,ox)\).  
Guillop\'{e} \cite{guillope1994entropies} showed that
	\begin{equation*}
	 \#\{x\in F \colon  L(x)\leq T\} \sim c e^{hT} \quad \text{as } T\to\infty,
	\end{equation*}
for some \(c>0\). Here \(f(T)\sim g(T)\) means that \({f(T)}/{g(T)}\to 1\) as \(T\to\infty\).

Let 
$\mathfrak C$ be a non-trivial conjugacy class in $F$. Then $\mathfrak C$ is infinite and
it is interesting to study the restriction of the above counting problem to this conjugacy class, i.e.\ to study the asymptotic behaviour of
\begin{equation*}
 N_{\mathfrak C}(T) := \#\{ x\in \mathfrak C \colon L(x)\le T\}.
\end{equation*}
The following is our main result.
	\begin{theorem} \label{thm: mainresult} Suppose that \(G\) is a finite connected metric graph 
	such that the degree of each vertex is at least $3$ and the set of lengths of closed geodesics in \(G\) is not contained in a discrete subgroup of \(\mathbb R\).  Let $\mathfrak C$ be a non-trivial conjugacy class in $F$. Then, for some constant 
\(C>0\), depending on $\mathfrak C$,
	\begin{equation*}
		N_{\mathfrak C}(T) \sim C e^{hT/2}, \quad \text{as } T\to\infty.
	\end{equation*}
	\end{theorem}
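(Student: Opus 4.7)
The plan is to combine a tree-geometric reduction with thermodynamic formalism for the natural subshift of finite type on directed edges of $G$. The starting observation is that in $\mathcal{T}$, any hyperbolic isometry $\gamma$ with axis $\operatorname{Ax}(\gamma)$ and translation length $\ell(\gamma)$ satisfies the \emph{exact} identity
\begin{equation*}
d_{\mathcal{T}}(o, \gamma o) = 2\,d_{\mathcal{T}}(o, \operatorname{Ax}(\gamma)) + \ell(\gamma),
\end{equation*}
realised by the piecewise-geodesic path from $o$ to the foot of the perpendicular on $\operatorname{Ax}(\gamma)$, along the axis by $\ell(\gamma)$, and back. Fix a representative $x_0 \in \mathfrak{C}$ with axis $A$ and translation length $\ell_0 := \ell(x_0)$. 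Its centralizer $C(x_0)$ in $F$ is the maximal cyclic subgroup containing $x_0$, which coincides with the $F$-stabilizer of $A$, so conjugation gives a bijection $F/C(x_0) \to \mathfrak{C}$ sending $gC(x_0)$ to the element with axis $gA$. Hence
\begin{equation*}
N_{\mathfrak{C}}(T) = M\bigl((T-\ell_0)/2\bigr), \qquad M(R) := \#\{gC(x_0) \colon d_{\mathcal{T}}(o, gA) \leq R\},
\end{equation*}
and the factor of $1/2$ already accounts for the exponent $h/2$: it suffices to prove $M(R) \sim C' e^{hR}$, which then gives the theorem with $C = C' e^{-h\ell_0/2}$.

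To study $M(R)$, I would encode geodesics in $\mathcal{T}$ by the subshift of finite type $\Sigma$ whose alphabet is the set of directed edges of $G$ and whose transition $e_1 \to e_2$ is allowed precisely when $e_2$ continues $e_1$ without backtracking. Finite admissible words in $\Sigma$ then correspond bijectively to reduced paths in $G$ and lift uniquely to geodesic segments from $o$ in $\mathcal{T}$. The axis $A$ projects to a closed geodesic in $G$ whose edge coding is a periodic word $\underline{w}$, and a translate $gA$ at distance $R$ from $o$ corresponds to an admissible finite word of total edge-length $R$ whose continuation by $\underline{w}$ is admissible. Introducing the length cocycle $r$ on $\Sigma$, this yields a representation of the Dirichlet series
\begin{equation*}
\eta(s) := \sum_{gC(x_0) \in F/C(x_0)} e^{-s\, d_{\mathcal{T}}(o,\, gA)}, \qquad \operatorname{Re}(s) > h,
\end{equation*}
as a sum of matrix coefficients of the resolvent $(I - \mathcal{L}_s)^{-1}$ of the Ruelle transfer operator $\mathcal{L}_s f(\underline{x}) := \sum_{\sigma \underline{y} = \underline{x}} e^{-s\, r(\underline{y})} f(\underline{y})$ acting on H\"older functions on $\Sigma$.

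Ruelle--Perron--Frobenius theory provides a simple leading eigenvalue $\lambda(s)$ of $\mathcal{L}_s$, real-analytic and strictly decreasing in $s$ with $\lambda(h) = 1$, which produces a simple pole of $\eta$ at $s = h$. The hypothesis that the lengths of closed geodesics in $G$ are not contained in a discrete subgroup of $\mathbb{R}$ is the non-lattice condition which, via the Parry--Pollicott spectral analysis, forbids further spectrum of $\mathcal{L}_s$ on the unit circle along the critical line $\operatorname{Re}(s) = h$. Thus $\eta$ extends meromorphically to a neighbourhood of $\{\operatorname{Re}(s) = h\}$ with $s = h$ as its only singularity there, and the Wiener--Ikehara Tauberian theorem delivers $M(R) \sim C' e^{hR}$.

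The principal obstacle is this last spectral input: translating the non-arithmeticity of the length spectrum into absence of further eigenvalues of modulus one for $\mathcal{L}_s$ on $\operatorname{Re}(s) = h$. This is where the edge-length hypothesis is essential and is the delicate step of the argument. The remaining tasks are routine but require care: factoring out the cyclic action of $C(x_0)$ on $A$ introduces a geometric-series correction in $e^{-h\ell_0}$ that is absorbed into $C'$; the admissibility condition at the junction with $\underline{w}$ contributes boundary terms to be handled by splitting $\eta$ according to the initial directed edge; and a basepoint $o$ lying in the interior of an edge (rather than at a vertex) is treated by a minor modification of the coding.
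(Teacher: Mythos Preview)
Your approach is sound and reaches the same conclusion, but by a genuinely different route from the paper. The paper stays on the group-theoretic side throughout: it encodes $F$ by the subshift on reduced words in a fixed generating set (not on directed edges of $G$), and decomposes each $x \in \mathfrak{C}$ explicitly as $w^{-1}gw$ with $g$ cyclically reduced. The key identity replacing your tree formula is the combinatorial one
\[
L(w^{-1}gw) = L(y^{-1}gy) - 2L(y) + 2L(w),
\]
where $y$ is a fixed-length initial segment of $w$; this splits the generating function $\eta_{\mathfrak{C}}(s) = \sum_{x \in \mathfrak{C}} e^{-sL(x)}$ directly into a finite sum over pairs $(g,y)$ times a matrix series $\sum_m \mathsf{e}_y A_{2s}^m \mathsf{e}_\emptyset$, and it is the appearance of $2s$ here that produces the pole at $h/2$. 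The spectral and Tauberian endgame (Perron--Frobenius, Wielandt plus the non-lattice hypothesis to exclude poles on $\mathrm{Re}(s)=h/2$, then Ikehara--Wiener) is then essentially the same as yours. Your geometric reduction via the exact axis identity $d_{\mathcal T}(o,\gamma o) = 2\,d_{\mathcal T}(o,\mathrm{Ax}(\gamma)) + \ell(\gamma)$ and the subsequent counting of axis translates is closer in spirit to the Parkkonen--Paulin ergodic-geometric method that the paper cites as an alternative; it is conceptually cleaner and generalises beyond free groups, but the bookkeeping you flag at the end (matching cosets $gC(x_0)$ with paths landing on the periodic word, junction and boundary terms, basepoints interior to edges) is precisely what the paper's word-based coding sidesteps---indeed the paper remarks explicitly that it chose the generator subshift over the edge subshift because it makes systematic enumeration of a conjugacy class more transparent.
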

	
We can also obtain a polynomial error term in our approximation to $N_\mathfrak{C}(T)$ subject
to a mild additional condition on the lengths of closed geodesics
(see Theorem \ref{thm: polyerror}).



%
In the case of a co-compact group of isometries of the hyperbolic plane, an analogue
of 
Theorem \ref{thm: mainresult}  was obtained by Huber \cite{Huber} in the 1960s.  If $\Gamma$ is
a co-compact Fuchsian group and $\mathfrak C$ is a non-trivial conjugacy class, he showed that
\[
\#\{g \in \mathfrak C \hbox{ : } d_{\mathbb H^2}(o,og) \leq T\} \sim C e^{T/2}, \quad
\hbox{as } T \to \infty,
\]
for some $C>0$ depending on \(\Gamma\) and \(\mathfrak{C}\) (while the unrestricted counting function
$\#\{g \in \Gamma \hbox{ : } d_{\mathbb H^2}(o,og) \leq T\}$ is asymptotic to a constant
times $e^T$).
Very recently, Parkkonen and Paulin \cite{parkkonen2013hyperbolic} have studied the same problem in higher dimensions 
and variable curvature, obtaining many results. In particular, they have shown that 
for the fundamental group of a compact negatively curved manifold acting on its universal cover,
the conjugacy counting function has exponential growth rate equal to $h/2$, where $h$ is
the topological entropy of the geodesic flow. They have an ergodic-geometric approach using,
in particular, the mixing properties of the Bowen--Margulis measure.

More closely related to our situation,
suppose that $G$ is a $(q+1)$-regular graph (i.e.\ each vertex has degree $q+1$)
with each edge given length $1$. Then Douma \cite{Douma} showed that
\[
N_{\mathfrak C}(n) \sim Cq^{\lfloor (n-l(\mathfrak C))/2\rfloor}, \quad \text{as } n\to\infty\ 
\]
(for $n \in \mathbb Z^+$), for some $C>0$, where $l(\mathfrak C)$ is the length of the closed geodesic in the conjugacy class $\mathfrak C$.

Since the first version of this paper was written, we learned that Broise-Alamichel, Parkkonen and Paulin also have results for graphs and metric graphs which include Theorem
\ref{thm: mainresult} and the result of Douma.
They also consider the more general situation of graphs of groups in the sense of Bass--Serre
theory. We understand that an 
account of this
work is in preparation \cite{BrPP}.

In contrast to the ergodic-geometric approach of Parkkonen and Paulin in \cite{parkkonen2013hyperbolic} or the use of spectral theory
of the graph Laplacian in
\cite{Douma} (which is inspired by Huber's original spectral approach \cite{Huber}), 
we use a method based on a symbolic coding of the group $F$ in terms of a subshift of finite type. We may then study a generating function via the spectra of a family of matrices. In the next section, we set out the background we shall need, discussing shifts of finite type and
some concepts from 
ergodic theory.  In section 3, we describe how
the lengths on the graph may be encoded in terms of a function on our subshift and use this function
to define a family of matrices and sketch a proof of
Guillop\'e's result given above. In section 4 we introduce a generating function appropriate to 
our problem and carry out an analysis which leads to the proof of Theorem \ref{thm: mainresult}.
In the final section we discuss error terms.

We are grateful to the referee for a very careful reading of our paper. Their comments have considerably
improved the exposition. 

%
%
%
%
\section{Preliminaries}
A (finite, connected) graph \(G=(V,E)\) consists of a finite collection of vertices \(V\) and edges \(E\).  Let \(E^o\) denote the oriented edge set of the graph \(G\).  For each \(e\in E^o\) we indicate the edge with reversed orientation by 
\(\overline{e}\). A \textit{path} is a sequence of consecutive oriented edges \(e_0, \ldots, e_{n-1}\); and we call a path \textit{non-backtracking} if, in addition, \(e_{i+1} \neq \overline{e_i}\) for \(i=0,\ldots, n-1\).  Path \(e_0,\ldots, e_{n-1}\) is said to be \textit{closed} if the terminal vertex of \(e_{n-1}\) and the initial vertex of \(e_0\) are the same.  We say a closed path \(e_0,\ldots, e_{n-1}\) is a \textit{closed geodesic} if the path is non-backtracking and \(e_{n-1}\neq \overline{e_0}\), i.e.\ each path given by a cyclic permutation of \(e_0,\ldots, e_{n-1}\) is non-backtracking.

The condition that each vertex has degree at least $3$ ensures that the fundamental
group of $G$ is a free group $F$ on $k \geq 2$ generators and that the universal cover is an infinite tree
$\mathcal T$. We put a metric on $G$ by assigning a positive length to each edge and this lifts to a metric on $\mathcal T$.

Fix a generating set  \(\mathcal{A} = \{a_1, \ldots, a_k\}\), \(k\ge 2\),
for the free group $F$ and write \(\mathcal{A}^{-1} = \{a_1^{-1}, \ldots, a_k^{-1}\}\).  We shall say \(x_0\cdots x_{n-1}\), with each \(x_i\in\mathcal{A}\cup\mathcal{A}^{-1}\), is a \textit{reduced word} 
if \(x_{i+1}\neq x_{i}^{-1}\) for \(i=0,\ldots ,n-1\) and a \textit{cylically reduced word} if, in addition, \(x_{n-1}\neq x_0^{-1}\).  Each non-identity element of \(F\) has a unique representation as a reduced word and the \textit{word length} \(|x|\) of \(x\) is the number of terms in its reduced word representation. 
We regard the identity element $1$ as a reduced word of length zero.
For each
$m \geq 0$, let \(W_m\) denote the set of reduced words of length at most $m$ and let
 \(W_m'\) denote the set of reduced words of length exactly $m$.  
 We let $W^* = \bigcup_{n=0}^\infty W_n'$ denote the set of all finite reduced words.

Let $\mathfrak C$ be a non-trivial conjugacy class in $F$. Then $\mathfrak C$ contains 
a cyclically reduced word $x_0 \cdots x_{n-1} \in W^*\setminus \{1\}$. The only other cyclically reduced words
in this conjugacy class are obtained by cyclic permutation and the elements of $\mathfrak C$ represented by non-cyclically reduced words have word length greater than $n$.

Recall the function  \(L: F\to \mathbb R^+\) given by \(L(x) = d_\mathcal{T}(o,ox)\). We will use the following lemma from \cite{sharp2010comparing}.

\begin{lemma}
\label{lem: ntoN} There exists \(N\in\mathbb N\) such that if \(n\ge N\) and \(x_0\cdots x_{n-1}\) is a reduced word then
	\[ L(x_0 x_1\cdots x_{n-1}) - L(x_1\cdots x_{n-1}) = L(x_0 x_1\cdots x_{N-1}) - L(x_1\cdots x_{N-1}).\]
\end{lemma}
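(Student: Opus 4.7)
The plan is to translate the lemma into a stabilisation statement about an initial geodesic segment in $\mathcal{T}$, and then verify this from the bounded-cancellation structure of how reduced words in $F$ are realised as geodesics in the tree.

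First I would exploit that $\mathcal{T}$ is an $\mathbb{R}$-tree, so that any three points $p_1, p_2, p_3 \in \mathcal{T}$ admit a unique median $m$ with $d_\mathcal{T}(p_i, p_j) = d_\mathcal{T}(p_i, m) + d_\mathcal{T}(m, p_j)$ for all $i \neq j$. Writing $y_n = x_1 \cdots x_{n-1}$ and applying this to the three points $o$, $o y_n$, $o x_0^{-1}$, and using that $F$ acts on $\mathcal{T}$ by isometries (so $d_\mathcal{T}(o y_n, o x_0^{-1}) = L(x_0 y_n)$), a short calculation yields
\begin{equation*}
L(x_0 y_n) - L(y_n) = L(x_0) - 2\, d_\mathcal{T}(o, m_n),
\end{equation*}
where $m_n$ denotes the median. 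Geometrically $d_\mathcal{T}(o, m_n)$ is the length of the maximal common initial segment in $\mathcal{T}$ of the geodesics $[o, o y_n]$ and $[o, o x_0^{-1}]$. Since the latter geodesic is independent of $n$ with length $L(x_0)$, the lemma reduces to showing that the initial segment of $[o, o y_n]$ of length at most $L(x_0)$ stabilises for all $n \geq N$, with $N$ uniform in the word.

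For the stabilisation I would use that $[o, o y_n]$ is obtained, after iterated backtracking cancellation at the junction points $o x_1, o x_1 x_2, \ldots$, by concatenating the lifts to $\mathcal{T}$ of the closed loops in $G$ representing $x_1, \ldots, x_{n-1}$. Each such lift has length bounded in terms of $G$ alone, and since $x_{i+1} \neq x_i^{-1}$ the cancellation at each junction is bounded by a constant $K$ depending only on $G$. It then follows that the initial segment of $[o, o y_n]$ of any fixed length $R$ is determined by a prefix $x_1, \ldots, x_{N(R) - 1}$ with $N(R)$ depending solely on $G$ and $R$. Maximising over the finitely many values $R = L(x_0)$ as $x_0$ ranges over $\mathcal{A} \cup \mathcal{A}^{-1}$ furnishes a uniform $N$.

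The main obstacle is the quantitative locality in the last step: one must preclude that a chain of junction cancellations propagates through arbitrarily many letters and perturbs the initial portion of the geodesic near $o$. In essence this is the statement that the infinite reduced word $x_1 x_2 \cdots$ determines a point on $\partial \mathcal{T}$ whose associated ray from $o$ is already controlled by a finite prefix. The cleanest route to this, and the one naturally used in the sequel, is via a symbolic coding of $F$ by a subshift of finite type on an alphabet refining $\mathcal{A} \cup \mathcal{A}^{-1}$ to absorb the local combinatorics, which is precisely the framework developed in the following sections.
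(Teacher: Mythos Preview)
The paper does not prove this lemma; it is quoted from \cite{sharp2010comparing}. So there is no ``paper's own proof'' to compare against, and your task was really to supply one.

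Your median reduction is correct and is the natural first move: in an $\mathbb R$-tree the identity
\[
L(x_0y_n)-L(y_n)=L(x_0)-2\,d_{\mathcal T}(o,m_n)
\]
holds with $m_n$ the centre of the tripod on $o$, $oy_n$, $ox_0^{-1}$, and the problem becomes the stabilisation of the initial segment of $[o,oy_n]$ of length at most $R:=\max_{a\in\mathcal A\cup\mathcal A^{-1}}L(a)$.

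The gap is in how you close the stabilisation. Boundedness of the cancellation at a \emph{single} junction is trivial (it is at most $\min(L(y_m),L(x_m))\le R$) and has nothing to do with $x_{m+1}\neq x_m^{-1}$; reducedness does not enter here. What you are missing, and what actually prevents a cascade of cancellations from reaching the initial segment, is \emph{linear growth}: there are constants $c>0$, $C\ge0$ with $L(y_m)\ge c\,m-C$ for every reduced word $y_m$. This is exactly where reducedness is used, and it follows from the \v{S}varc--Milnor lemma applied to the free, cocompact action of $F$ on $\mathcal T$ (so that $g\mapsto og$ is a quasi-isometry). Once $L(y_j)>R+R$ for all $j\ge N_0$, the per-step cancellation can only erode the last $R$ of $[o,oy_j]$, and the initial $R$-segment is frozen. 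Combining the two ingredients gives a uniform $N$.

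Your final paragraph proposes to fill this gap via the symbolic coding ``developed in the following sections''. That is circular: in the paper the alphabet of the coding is $W_{N-1}$ and the function $r$ is defined using Lemma~\ref{lem: ntoN} (Definitions~\ref{def: r} and \ref{def: rformatrix}); the very constant $N$ you are trying to produce indexes the subshift. You need the \v{S}varc--Milnor input (or an equivalent direct argument for trees) before the coding can be set up, not after.
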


It is useful to consider infinite sequences and dynamics on them. 
In particular, it is convenient to associate closed geodesics in the graph to 
periodic orbits of the shift map. We can define a function on both finite and infinite
reduced words which will encode the lengths $L(x)$ and which will also
give the lengths of closed geodesics by summing the function around the corresponding periodic orbits. Introducing the shift map on infinite sequences also has the advantage of allowing us to use
thermodynamic concepts from ergodic theory, for example pressure and equilibrium states 
defined below, and standard results about differentiating pressure.
Let $\Sigma$ denote the set of infinite reduced words in $\mathcal A \cup \mathcal A^{-1}$:
\[
\Sigma = \left\{(x_n)_{n=0}^\infty \hbox{ : } x_n \in \mathcal A \cup \mathcal A^{-1}
\ \mathrm{and} \ x_{n+1} \neq x_n^{-1} \ \forall n \in \mathbb Z^+\right\}.
\]
We will need to study the dynamical system on $\Sigma$ given by the shift map
$\sigma : \Sigma \to \Sigma$, defined by
$\sigma((x_n)_{n=0}^\infty) = (x_{n+1})_{n=0}^\infty$. This is a subshift of finite type and we will
refer to \cite{PP} for the general theory of these systems.

Writing \(x =(x_n)_{n=0}^\infty,y =(y_n)_{n=0}^\infty \in\Sigma\), 
we endow 
\(\Sigma\) with the metric defined by \(d(x,x)=0\) and,  for \(x\neq y\), \(d(x,y) = (1/2)^k\), where 
\(k = \min\{n\in\mathbb N \colon x_n\neq y_n\}\). This makes $\Sigma$ into a Cantor set
and makes the map $\sigma$ continuous. Furthermore, $\sigma$ is topologically mixing
(i.e. if $U,V \subset \Sigma$ are non-empty open sets then $\sigma^{-n}U \cap V \neq \varnothing$
for all sufficiently large $n$).

A function \(f:\Sigma \to\mathbb R\) is said to be \textit{locally constant} if there exists 
an \(N\in\mathbb N\) such that for any two elements \(x,y\in\Sigma\) with \(x_n=y_n\) for 
every \(0\le n\le N\) we have \(f(x)=f(y)\).  
If $f$ is locally constant 
then it is  
\textit{H\"{o}lder continuous} for every positive exponent (i.e.\ where 
$f$ is H\"older continuous  of exponent $\alpha>0$ if
there exists a positive constant \(\kappa\) such that
$|f(x) -f(y)| \le \kappa d(x,y)^\alpha$,
for every \(x,y\in\Sigma\)).  We will use the notation
\[
f^n = f + f\circ\sigma + \cdots + f\circ\sigma^{n-1}.
\]

Two H\"{o}lder continuous functions \(f,g:\Sigma
\to\mathbb R\) are said to be \textit{cohomologous} if \(f=g +u\circ \sigma -u\) for some continuous function 
\(u:\Sigma \to\mathbb R\).  
Clearly, two cohomologous functions have the same integral with respect to each
$\sigma$-invariant measure.

Let \(\mathcal{M}\) denote the set of \(\sigma\)-invariant probability measures on 
\(\Sigma\).  We denote by \(h(\mu)\) the measure theoretic entropy of $\sigma$
with respect to
\(\mu\).  For a continuous function \(f: \Sigma \to\mathbb R\), we define its \textit{pressure} 
\(P(f)\) by
	\begin{equation*}
	 P(f) = \sup_{\mu\in\mathcal{M}} \biggl( h(\mu) + \int f \, d\mu \biggr).
	\end{equation*}
We say that \(m\in\mathcal{M}\) is an \textit{equilibrium state} for \(f\) if the supremum is attained at \(m\).  When \(f\) is a H\"{o}lder continuous function the Variational Principle (\cite{PP}, Theorem 3.5) tells us the equilibrium state is unique.

\begin{remark}
Another subshift of finite type naturally associated to the graph is obtained by taking
infinite paths, i.e. infinite sequences of oriented edges with the restriction that
$e'$ can follow $e$ only if $e$ terminates at the initial vertex of $e'$.
The advantage of our approach is that is makes it easier to 
systematically enumerate the elements of a given conjugacy class. On the other hand, it requires more 
work to represent the edge lengths and we introduce a function that does this below.
\end{remark}

We will also need to consider the spectra of non-negative matrices and complex matrices.  
We say that a non-negative (square) matrix $A$ is {\it irreducible} if, for each 
pair of indices $(i,j)$, there exists 
$n \geq1$ such that $A^n(i,j)>0$ and that $A$ is {\it aperiodic} if there exists $n \geq 1$ such that, 
for each pair of indices $(i,j)$, $A^n(i,j)>0$.
We will use the following two standard results.

\begin{theorem}[Perron--Frobenius Theorem \cite{Gant}] \label{thm: Perron-Frobenius} Suppose that \(A\) is 
an aperiodic matrix with non-negative entries.  Then \(A\) has a simple and positive eigenvalue 
\(\beta\) such that \(\beta\) is strictly greater in modulus than all the remaining eigenvalues of \(A\).  
The left and right eigenvectors associated to the eigenvalue \(\beta\) have strictly positive entries.  
Moreover, \(\beta\) is the only eigenvalue of \(A\) that has an eigenvector whose entries are all 
non-negative.
\end{theorem}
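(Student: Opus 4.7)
The plan is to prove the four assertions (existence of a positive simple eigenvalue $\beta$, positivity of eigenvectors, strict dominance of $\beta$ in modulus, and characterisation of $\beta$ among eigenvalues admitting a non-negative eigenvector) in that order, leveraging aperiodicity through the existence of some power $A^N$ with strictly positive entries.

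First, I would establish existence of a non-negative eigenvector. Consider the compact convex simplex $\Delta = \{v \in \mathbb{R}_{\ge 0}^n : \sum_i v_i = 1\}$ and the continuous map $F : \Delta \to \Delta$ defined by $F(v) = Av/\|Av\|_1$ (which is well-defined since, by irreducibility, $Av \ne 0$ for $v \in \Delta$). Brouwer's fixed point theorem yields a fixed point $v^*$, so $Av^* = \beta v^*$ for some $\beta \ge 0$. Applying $A^N$ (strictly positive by aperiodicity) and using $A^N v^* = \beta^N v^*$ forces $v^* > 0$ componentwise and $\beta > 0$. The same argument applied to $A^T$ gives a strictly positive left eigenvector $u$ with eigenvalue $\beta'$, and the identity $\beta \langle u, v^*\rangle = \langle u, A v^*\rangle = \langle A^T u, v^*\rangle = \beta' \langle u, v^*\rangle$ forces $\beta' = \beta$.

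Next I would handle simplicity and dominance together. For simplicity, if $w$ satisfies $Aw = \beta w$, choose $t \in \mathbb{R}$ so that $v^* - t w$ has a zero coordinate but is still non-negative; applying $A^N$ and using positivity of $A^N$ shows that $v^* - t w$ must be identically zero, so $w$ is a scalar multiple of $v^*$. For the geometric-equals-algebraic multiplicity, one shows that a Jordan block would produce vectors $z$ with $A^m z / \beta^m \to \infty$, contradicting the boundedness given by the normalisation. For dominance, suppose $Aw = \lambda w$ with $|\lambda| = \beta$ and $w \ne 0$. The pointwise inequality $|Aw| \le A|w|$ combined with $A|w| \ge \beta |w|$ and the relation $\beta \langle u, |w|\rangle = \langle u, A|w|\rangle$ forces equality $A|w| = \beta |w|$, hence $|w|$ is a positive multiple of $v^*$. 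Writing $w_j = |w_j| e^{i\theta_j}$, equality in the triangle inequality $|(Aw)_j| = (A|w|)_j$ forces all phases $\theta_j + (\text{argument pattern})$ to align; iterating with $A^N > 0$ forces $\theta_j$ to be a common constant, whence $\lambda = \beta$.

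Finally, the last assertion follows from the left eigenvector identity: if $Aw = \mu w$ with $w \ge 0$, $w \ne 0$, then $\mu \langle u, w\rangle = \langle u, Aw\rangle = \beta \langle u, w\rangle$, and $\langle u, w\rangle > 0$ because $u > 0$ and $w$ is non-negative and non-zero, so $\mu = \beta$. The main obstacle in writing this cleanly is the strict dominance step: extracting $|\lambda| < \beta$ from $|\lambda| = \beta$ requires the phase-alignment argument above, which genuinely uses aperiodicity rather than just irreducibility (otherwise one would only get that $\lambda$ is $\beta$ times a root of unity, which is precisely the periodic case).
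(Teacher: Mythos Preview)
The paper does not prove this theorem; it is stated as a classical result with a reference to Gantmacher \cite{Gant} and is used as a black box. There is therefore no ``paper's own proof'' to compare against.

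That said, your sketch is the standard approach (Brouwer fixed point on the simplex, positivity from $A^N>0$, simplicity via the subtraction trick, dominance via the triangle-inequality phase-alignment argument, and the left-eigenvector pairing for the final uniqueness claim), and it is essentially correct. Two places would need tightening in a full write-up: the algebraic multiplicity argument (``a Jordan block would produce vectors $z$ with $A^m z/\beta^m \to \infty$, contradicting the boundedness given by the normalisation'') is too vague as stated---you need to say what is bounded and why; one clean way is to use the positive left eigenvector to show that $\langle u, A^m z\rangle = \beta^m \langle u, z\rangle$ remains bounded in modulus by a constant times $\beta^m$, which is incompatible with polynomial growth in $m$. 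Likewise, the dominance step as written only reaches $|w| = c v^*$ and then asserts phase alignment; you should spell out that equality in $|\sum_k A_{jk} w_k| = \sum_k A_{jk}|w_k|$ with all $A_{jk}>0$ (after passing to $A^N$) forces the $w_k$ to share a common argument, hence $w$ is a unimodular multiple of $v^*$ and $\lambda=\beta$.
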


%
\begin{theorem}[Wielandt's Theorem \cite{Gant}] \label{thm:Wielandt} Suppose that \(A\) is a square matrix with complex entries
and let \(|A|\) be the matrix whose entries are given by \(|A|(s,s') = |A(s,s')|\).  Suppose 
further that \(|A|\) is aperiodic and let $\beta$ be its maximal eigenvalue guaranteed by the 
Perron--Frobenius Theorem. Then the moduli of the eigenvalues of \(A\) are bounded above by \(\beta\).  Moreover, \(A\) has an eigenvalue of the form \(\beta e^{i \theta}\) (with \(\theta\in[0,2\pi]\)) if and only if  \(A= e^{i \theta}D|A|D^{-1}\) where \(D\) is a diagonal matrix whose entries along the main diagonal all have modulus one.  
\end{theorem}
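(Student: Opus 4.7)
The plan is to reduce both claims to the entrywise triangle inequality $|Av|\le |A|\,|v|$ combined with the structural information Theorem~\ref{thm: Perron-Frobenius} provides for $|A|$, crucially the existence of a strictly positive left Perron eigenvector. Write $\beta$ for the Perron eigenvalue of $|A|$ and $u>0$ for a left Perron eigenvector, so that $u^{T}|A|=\beta u^{T}$. For the bound, if $Av=\lambda v$ with $v\neq 0$, then taking entrywise moduli yields $|\lambda|\,|v|\le |A|\,|v|$, and pairing with $u^{T}$ gives
$$|\lambda|\,(u^{T}|v|)\;\le\;u^{T}|A|\,|v|\;=\;\beta\,(u^{T}|v|).$$
Since $u>0$ and $|v|\neq 0$, the inner product $u^{T}|v|$ is strictly positive, so $|\lambda|\le \beta$.

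For the characterisation of equality, suppose $Av=\beta e^{i\theta}v$. The preceding chain collapses to equality, giving $u^{T}\bigl(|A|\,|v|-\beta|v|\bigr)=0$ with $|A|\,|v|-\beta|v|\ge 0$ entrywise, hence $|A|\,|v|=\beta|v|$. By Theorem~\ref{thm: Perron-Frobenius}, $|v|$ must be a positive multiple of the Perron eigenvector of $|A|$ and so has strictly positive entries. Writing $v_{k}=|v_{k}|e^{i\phi_{k}}$, the row-$j$ identity $\sum_{k}A_{jk}v_{k}=\beta e^{i\theta}v_{j}$ realises equality in the triangle inequality
$$\Bigl|\sum_{k}A_{jk}v_{k}\Bigr|\;=\;\beta|v_{j}|\;=\;\sum_{k}|A_{jk}|\,|v_{k}|,$$
and strict convexity of the unit disk forces every non-zero summand $A_{jk}v_{k}$ to have common argument $\theta+\phi_{j}$. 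This gives $A_{jk}=|A_{jk}|e^{i(\theta+\phi_{j}-\phi_{k})}$ (trivially also when $A_{jk}=0$). Setting $D=\mathrm{diag}(e^{i\phi_{1}},\ldots,e^{i\phi_{n}})$, this reads $A=e^{i\theta}D|A|D^{-1}$. The converse is immediate: if $A=e^{i\theta}D|A|D^{-1}$, then $A$ is similar to $e^{i\theta}|A|$, which has $\beta e^{i\theta}$ as an eigenvalue.

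The principal subtlety is the pointwise inference from the sum-level equality: I rely on the fact that equality in $\bigl|\sum_{k}z_{k}\bigr|\le \sum_{k}|z_{k}|$ forces the non-zero summands to share a common argument. What makes this usable is precisely that $|v_{k}|>0$ for every $k$, so that the argument of $A_{jk}v_{k}$ faithfully records that of $A_{jk}$ whenever $A_{jk}\neq 0$. That strict positivity is in turn where the aperiodicity hypothesis enters, via the sharp Perron--Frobenius conclusion on $|A|$; without it, the relation could only be recovered on the support of $|v|$ and the global similarity $A=e^{i\theta}D|A|D^{-1}$ would fail.
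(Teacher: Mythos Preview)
Your proof is correct and follows the classical argument. The paper itself does not supply a proof of this theorem: it is stated with a citation to Gantmacher \cite{Gant} and used as a black box. What you have written is essentially the standard textbook proof (and is in fact the argument one finds in Gantmacher), so there is nothing to compare against within the paper.
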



\section{Length Functions, Matrices and Spectra}

We will prove Theorem \ref{thm: mainresult} by studying the analytic properties 
of a generating function $\sum_{x \in \mathfrak C} e^{-sL(x)}$. More precisely, we will show that the
generating function is analytic in the half-plane
$\mathrm{Re}(s) >h/2$, has a simple pole at $s=h/2$ and, crucially, apart from this pole has an analytic extension to a neighbourhood of $\mathrm{Re}(s) =h/2$. 
To do this, we will 
show that the generating function can essentially be written in terms of a family of
weighted matrices
and their eigenvectors. The required analytic properties will then follow from results 
about the spectra of these matrices. In turn, the key spectral property
(see Lemma \ref{lem: Wielandtlem} below) is a consequence of the hypothesis the lengths of 
closed geodesics in our metric graph do not lie in a discrete subgroup of the real numbers.

In this section, we will set up the machinery required to study the generating function.
We first introduce a function defined on (finite and infinite) reduced words which encodes 
information about lengths on the graph.
(This is similar to 
the constructions in
\cite{sharp2009distortion} and \cite{sharp2010comparing}.)
We will then introduce our weighted matrices and establish
some of their properties.

\begin{definition} \label{def: r}
We define a function $r : \Sigma \to \mathbb R$ by
\[
r((x_i)_{i=0}^\infty) = L(x_0 x_1\cdots x_{N-1}) - L(x_1\cdots x_{N-1}).
\]
We also define $r : W^* \to \mathbb R$ by $r(1)=0$ and, for $n \geq 1$,
\[
r(x_0 \cdots x_{n-1}) = L(x_0 \cdots x_{n-1}) -L(x_1 \cdots x_{n-1}).
\]
Note that, by Lemma \ref{lem: ntoN}, if $n \geq N$ then
\[
r(x_0 \cdots x_{n-1}) = L(x_0 \cdots x_{N-1}) -L(x_1 \cdots x_{N-1}).
\]
\end{definition}

We may also extend the definition of the shift map $\sigma$ to finite reduced words
by defining $\sigma : W^* \to W^*$ by 
\[\sigma(x_0 x_1 \cdots x_{n-1}) = (x_1 \cdots x_{n-1})\] 
and $\sigma 1=1$. We shall continue to write
$r^n = r + r \circ \sigma + \cdots + r \circ \sigma^{n-1}$.
The next lemma is immediate from the definition of $r$.

\begin{lemma}
\label{lem: Linr}
For any finite reduced word $x_0 \cdots x_{n-1}$, we have $L(x_0 \cdots x_{n-1}) = r^n(x_0 \cdots  x_{n-1})$. 
\end{lemma}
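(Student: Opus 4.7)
The plan is to prove the identity by a direct telescoping calculation using Definition \ref{def: r}. First I would unpack the definition of $r^n$ applied to the finite reduced word $x_0 \cdots x_{n-1}$, using the extension of $\sigma$ to $W^*$ introduced just above the lemma. Since $\sigma^i(x_0 \cdots x_{n-1}) = x_i \cdots x_{n-1}$ for $0 \le i \le n-1$ (and equals $1$ for $i \ge n$), this gives
\begin{equation*}
r^n(x_0 \cdots x_{n-1}) = \sum_{i=0}^{n-1} r(x_i \cdots x_{n-1}).
\end{equation*}

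Next I would apply Definition \ref{def: r} term by term. For $0 \le i \le n-2$, each summand is $L(x_i \cdots x_{n-1}) - L(x_{i+1} \cdots x_{n-1})$. For the final term $i = n-1$, the definition gives $r(x_{n-1}) = L(x_{n-1}) - L(1)$, and since $L(1) = d_{\mathcal T}(o, o \cdot 1) = 0$, this is just $L(x_{n-1})$. Adding everything up produces a telescoping sum
\begin{equation*}
\sum_{i=0}^{n-1} \bigl(L(x_i \cdots x_{n-1}) - L(x_{i+1} \cdots x_{n-1})\bigr) = L(x_0 \cdots x_{n-1}) - L(1) = L(x_0 \cdots x_{n-1}),
\end{equation*}
which is the desired identity. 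The boundary cases $n=0$ (empty word, $r^0(1) = 0 = L(1)$) and $n=1$ ($r^1(x_0) = L(x_0) - L(1) = L(x_0)$) are handled in the same way.

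There is no real obstacle here: the lemma is a formal consequence of how $r$ was defined in Definition \ref{def: r}, namely as the discrete derivative $L(\cdot) - L(\sigma(\cdot))$ on finite reduced words, so summing $r$ along the orbit $x_0 \cdots x_{n-1}, \, x_1 \cdots x_{n-1}, \, \ldots, \, x_{n-1}, \, 1$ under $\sigma$ recovers $L(x_0 \cdots x_{n-1})$. The one point that requires a line of justification is that Lemma \ref{lem: ntoN} is \emph{not} needed for this identity on finite words, because here we are using the short definition of $r$ valid for all $n \ge 1$; Lemma \ref{lem: ntoN} only enters to reconcile that definition with the locally constant form used later (and for infinite sequences in $\Sigma$).
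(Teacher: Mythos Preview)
Your proof is correct and is exactly the telescoping argument that the paper has in mind; the paper simply states the lemma is ``immediate from the definition of $r$'' without writing out the sum, and your proposal spells out those details.
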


The following lemma connects closed geodesics in the graph \(G\) to periodic points in the subshift of finite type $\sigma : \Sigma \to \Sigma$. The fact that the sums of $r$ over periodic points
do not lie in a discrete subgroup will be crucial in establishing that our generating function has no non-real
poles on its abscissa of convergence.
\begin{lemma} \label{lem: geodesiclength}
Let $\gamma$ be the unique closed geodesic corresponding to the periodic orbit
$\{x,\sigma x,\ldots,\sigma^{n-1}x\}$ ($\sigma^n x=x$)  with \(x=(x_i)_{i=0}^\infty \in \Sigma\). Then
 \(r^n(x) = l(\gamma)\).
 In particular,
 \(\{r^n(x) \colon \sigma^nx=x, n \geq 1\}\) is not contained in a discrete 
 subgroup of $\mathbb R$.
\end{lemma}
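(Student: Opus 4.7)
The plan is to evaluate $L(w^m)$ for $w := x_0 x_1 \cdots x_{n-1}$ in two different ways and compare. Since $\sigma^n x = x$, the finite word $w$ is cyclically reduced (as $x_{n-1} \neq x_n^{-1} = x_0^{-1}$), so $w^m$ is a reduced word of length $nm$, and $w$ represents a non-trivial element of the free group $F$. Since $F$ acts on the tree $\mathcal T$ without inversions and $w$ is non-trivial, $w$ acts as a hyperbolic isometry with a well-defined axis $A \subset \mathcal T$, and $A$ projects in $G$ to the unique closed geodesic in the free homotopy class of $w$, namely $\gamma$, with translation length equal to $l(\gamma)$.

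Using standard tree geometry (the projection of $o$ onto $A$ lies on the geodesic from $o$ to $w^m o$), one obtains the exact identity
\[
L(w^m) = d_{\mathcal T}(o, w^m o) = m\, l(\gamma) + 2 d_{\mathcal T}(o, A),
\]
so $L(w^m) = m\, l(\gamma) + C_1$ with $C_1$ independent of $m$.

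On the symbolic side, I would apply Lemma \ref{lem: Linr} to the reduced word $w^m = x_0 x_1 \cdots x_{nm-1}$ (with indices read mod $n$) to get the telescoping identity
\[
L(w^m) = \sum_{k=0}^{nm-1} \bigl( L(x_k \cdots x_{nm-1}) - L(x_{k+1} \cdots x_{nm-1}) \bigr).
\]
For $0 \le k \le nm - N$ the tail $x_k \cdots x_{nm-1}$ has length at least $N$, so by Lemma \ref{lem: ntoN} the $k$-th summand depends only on $x_k, \ldots, x_{k+N-1}$ and equals $r(\sigma^k x)$ in the sense of Definition \ref{def: r} for the infinite sequence. Since $r$ is bounded and $k \mapsto r(\sigma^k x)$ is periodic of period $n$, these terms sum to $m\, r^n(x) + O(1)$, and the $O(N)$ remaining boundary terms (where the tail is shorter than $N$) are uniformly bounded in $m$. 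Therefore $L(w^m) = m\, r^n(x) + C_2(m)$ with $C_2(m)$ bounded.

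Comparing the two expressions for $L(w^m)$, dividing by $m$ and letting $m \to \infty$ yields $r^n(x) = l(\gamma)$, as claimed. For the ``in particular'' statement, every non-trivial conjugacy class in $F$ contains a cyclically reduced word, which can be extended periodically to give a periodic point of $\sigma$; conversely every periodic orbit furnishes a cyclically reduced word and hence a conjugacy class, whose unique closed geodesic representative has length $r^n(x)$. Thus the set $\{r^n(x) : \sigma^n x = x,\, n \ge 1\}$ coincides with the set of lengths of closed geodesics in $G$, which is not contained in a discrete subgroup of $\mathbb R$ by hypothesis. The main technical nuisance will be verifying the tree-geometric formula $d(o, w^m o) = m l(\gamma) + 2 d(o, A)$ cleanly and bookkeeping the $O(1)$ boundary contribution on the symbolic side; neither should present a genuine obstacle, since the interior of the sum is where the substantive matching $r^n(x) = l(\gamma)$ happens.
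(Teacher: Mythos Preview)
Your proposal is correct and follows essentially the same approach as the paper: both compute $\lim_{m\to\infty} m^{-1} L(w^m)$ symbolically (via the telescoping sum and Lemma~\ref{lem: ntoN}, yielding the bound $|r^{mn}(x)-L(w^m)|\le 2N\|r\|_\infty$) and geometrically (as a translation length), then equate the limits. The only minor difference is on the geometric side: you invoke the exact axis formula $d_{\mathcal T}(o,w^m o)=m\,l(\gamma)+2d_{\mathcal T}(o,A)$, whereas the paper uses a softer triangle-inequality argument together with a conjugator to reach the same limit.
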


\begin{proof}
Let $(x_0 \cdots x_{n-1})^{m}$ denote the $m$-fold concatenation of the
cyclically reduced word $x_0 \cdots x_{n-1}$.
By the definition of $r$ and Lemma \ref{lem: Linr}, we have
 \[
\left|r^{mn}(x) - L((x_0 \cdots x_{n-1})^m)\right| \leq 2N\|r\|_\infty
 \]
 and so, since $r^{mn}(x)=mr^n(x)$, 
 we deduce
 \[
 r^n(x) = \lim_{m \to \infty} \frac{1}{m} L((x_0 \cdots x_{n-1})^m).
 \]
 
 Now consider the closed geodesic $\gamma$ in $G$.
 This lifts to a geodesic path in $\mathcal T$, from some vertex $p$
 to $pg$, where $g \in F$ is conjugate to $x_0 \cdots x_{n-1}$.
 For each $m \geq 1$, we have $ml(\gamma) = d_{\mathcal T}(p,pg^m)$.
 For any vertex $q \in \mathcal T$, the triangle inequality gives
 \[
 d_{\mathcal T}(p,pg^m)- 2d_{\mathcal T}(p,q)
 \leq d_{\mathcal T}(q,qg^m) \leq d_{\mathcal T}(p,pg^m) + 2d_{\mathcal T}(p,q),
 \]
 so that
 \[
 l(\gamma) = \lim_{m \to \infty} \frac{1}{m} d_{\mathcal T}(q,qg^m).
 \]
 If $x_0 \cdots x_{n-1} = w^{-1}gw$ then putting $q = ow^{-1}$ gives
\begin{align*}
 l(\gamma)
 &= \lim_{m \to \infty} \frac{1}{m} d_{\mathcal T}(ow^{-1},ow^{-1}g^m)
 = \lim_{m \to \infty} \frac{1}{m} d_{\mathcal T}(o,o(x_0 \cdots x_{n-1})^m) \\
 &= \lim_{m \to \infty} \frac{1}{m} L((x_0 \cdots x_{n-1})^m).
 \end{align*}
 This completes the proof.
\end{proof}

Now we turn to the definition of the matrices we use to analyse the generating function.

We begin by defining an unweighted transition matrix \(A\), whose rows and columns are indexed by \(W_{N-1}\),
the set of reduced words of length at most $N-1$,
where the number
$N \geq 2$ is given by Lemma \ref{lem: ntoN}.  
The entries of $A$ are defined as follows.
We have $A(x,y) =1$ if there exists $n \leq N-2$ and there exists $x_0,x_1,\ldots, x_{n-2} \in \mathcal A \cup
\mathcal A^{-1}$ such that $x$ has the reduced word representation 
$x = x_0x_1 \cdots x_{n-2}$ and $y$ has the reduced word representation 
$y = x_1 \cdots x_{n-2}$, or if there exists $x_0,x_1,\cdots,x_{N-2},y_{N-2} \in \mathcal A
\cup \mathcal A^{-1}$ such that $x$ has the reduced word representation 
$x=x_0x_1 \cdots x_{N-2}$ and $y$ has the reduced word representation
$y = x_1 \cdots x_{N-2}y_{N-2}$. We have $A(x,y)=0$ in all other cases.

We next define the numbers we shall use to define weighted matrices compatible with $A$.

\begin{definition} \label{def: rformatrix}
For each pair $(x,y) \in W_{N-1} \times W_{N-1}$ with $A(x,y)=1$, we define a number $R(x,y)$ by
\[
R(x,y) = r(x_0 x_1 \cdots x_{N-2} y_{N-2})
= L(x_0x_1\cdots x_{N-2}y_{N-2}) - L(x_1 \cdots x_{N-2}y_{N-2}),
\]
in the case where $x= x_0 x_1 \cdots x_{N-2}$ and $y=x_1 x_2 \cdots x_{N-2} y_{N-2}$, 
and
\[
R(x,y) = r(x_0 x_1 \cdots x_{n-2})
= L(x_0x_1\cdots x_{n-2}) - L(x_1 \cdots x_{n-2}),
\]
in the case where $x= x_0 x_1 \cdots x_{n-2}$ and $y=x_1 x_2 \cdots x_{n-2}$,
with $n \leq N$.
\end{definition}

\medskip

We now introduce the family of weighted
matrices with which we encode edge lengths.  The matrices \(A_s\), with \(s\in \mathbb{C}\), 
have rows and columns indexed by \(W_{N-1}\) with entries
\begin{equation*}
A_s(x,y) = \begin{dcases} 0 & \text{if}\ A(x,y)=0, \\ 
e^{-sR(x,y)} & \text{otherwise}.
		\end{dcases}
\end{equation*}

We will be interested in the spectral properties of \(A_s\).  
If $s \in \mathbb R$ then $A_s$ has non-negative entries but it is not aperiodic or even
irreducible and we cannot apply the Perron--Frobenius Theorem directly. Similarly, we cannot apply Wielandt's Theorem directly to $A_s$ when $s \in \mathbb C$. 
Instead, we will consider the submatrix of \(A_s\) with rows and columns indexed by 
\(W_{N-1}'\), which we will denote by $B_s$. 
One can easily see that $A_s$ and $B_s$ have the same non-zero spectrum, though
$A_s$ has additional zero eigenvalues.

When \(s\in\R\), the matrix \(B_s\) is aperiodic and so, by the Perron--Frobenius Theorem, has a 
simple and positive eigenvalue \(\beta(s)\), which is strictly greater in modulus than all of the 
other eigenvalues of \(B_s\).  Let \(\tilde{u}(s)\) and \(\tilde{v}(s)\) denote the left and right 
eigenvectors of \(B_s\) 
corresponding to 
\(\beta(s)\),
normalised so that $\tilde u(s)$ is a probability vector and $\tilde u(s) \cdot \tilde v(s) =1$.

 We have the following lemma.
\begin{lemma} \label{lem: beta_decreasing}  For \(s\in\R\), suppose that \(\beta(s)\) is the maximal eigenvalue of the matrix \(B_s\).  Then $\beta(s)$ is real analytic and is related to the integral of 
\(r: \Sigma \to \R\) by the formula
 \begin{equation*} \beta'(s) = - \beta(s) \int r \, d\mu_{-sr}, \end{equation*}
 where $\mu_{-sr}$ is the equilibrium state for $-s r$.
 Furthermore, $\int r \, d\mu_{-s r} >0$.
\end{lemma}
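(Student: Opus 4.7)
The plan is to prove the three assertions—analyticity, the derivative formula, and positivity—using (i) analytic perturbation theory, (ii) differentiation of the eigenequation combined with the identification of $B_s$ with a transfer operator, and (iii) Birkhoff's ergodic theorem combined with the quasi-isometric nature of the $F$-action on $\mathcal T$.

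First, for real analyticity, I would observe that each entry of $B_s$ is either identically zero or of the form $e^{-sR(x,y)}$, so $B_s$ is an entire matrix-valued function of $s$. For $s\in\mathbb R$, aperiodicity of $B_s$ and the Perron--Frobenius Theorem give that $\beta(s)$ is a simple eigenvalue of maximal modulus. Standard analytic perturbation theory (Kato) then gives that $\beta(s)$ is real analytic on $\mathbb R$ and that the left and right eigenvectors $\tilde u(s),\tilde v(s)$ can be chosen to depend analytically on $s$ under the normalizations $\tilde u(s)\cdot\tilde v(s)=1$ and $\tilde u(s)$ a probability vector.

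Second, for the derivative formula, I would differentiate $B_s\tilde v(s)=\beta(s)\tilde v(s)$ in $s$ and apply $\tilde u(s)$ on the left. Using $\tilde u(s) B_s=\beta(s)\tilde u(s)$, the terms involving $\tilde v'(s)$ cancel, and with $\partial_s B_s(x,y)=-R(x,y)B_s(x,y)$ one obtains
\[
\beta'(s) \;=\; \tilde u(s)\,B_s'\,\tilde v(s)\;=\;-\sum_{A(x,y)=1}\tilde u(s)(x)\,R(x,y)\,e^{-sR(x,y)}\,\tilde v(s)(y).
\]
To rewrite this as $-\beta(s)\int r\,d\mu_{-sr}$, I would recode $\Sigma$ by blocks of length $N-1$, so that under this recoding $r$ becomes a function of two consecutive coordinates, and $B_s$ becomes precisely the finite-dimensional transfer operator for the potential $-sr$. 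Then $\log\beta(s)=P(-sr)$, the equilibrium state $\mu_{-sr}$ has cylinder measures built from $\tilde u(s),\tilde v(s)$ and the weights $e^{-sR(x,y)}$ (Ruelle--Perron--Frobenius), and the standard derivative-of-pressure identity $\frac{d}{ds}P(-sr)=-\int r\,d\mu_{-sr}$ yields the claim.

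For the positivity $\int r\,d\mu_{-sr}>0$, it suffices to show $\int r\,d\nu>0$ for every $\sigma$-invariant probability measure $\nu$. By Birkhoff's ergodic theorem, for an ergodic $\nu$ and $\nu$-a.e.\ $x$,
\[
\frac{r^n(x)}{n}\;\longrightarrow\;\int r\,d\nu.
\]
Comparing the $\Sigma$-version of $r$ with the $W^*$-version, Lemmas \ref{lem: ntoN} and \ref{lem: Linr} give $r^n(x)=L(x_0\cdots x_{n-1})+O(1)$. Since $F$ acts on the tree $\mathcal T$ quasi-isometrically to its action on its Cayley graph, there exists $c>0$ such that $L(x_0\cdots x_{n-1})\ge c n$ for every reduced word of length $n$; dividing by $n$ and letting $n\to\infty$ yields $\int r\,d\nu\ge c>0$. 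The main obstacle in the whole proof is the second step, specifically the bookkeeping that matches $B_s$ with the transfer operator for $-sr$ after the block recoding; once this is set up cleanly, both the derivative formula and positivity reduce to standard thermodynamic formalism plus the linear-displacement estimate.
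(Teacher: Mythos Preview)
Your proof is correct and, for analyticity and the derivative formula, follows essentially the same route as the paper: both invoke analytic perturbation theory for the simple eigenvalue and the identification $\log\beta(s)=P(-sr)$ together with the standard pressure-derivative identity $\frac{d}{ds}P(-sr)=-\int r\,d\mu_{-sr}$. Your explicit differentiation of the eigenequation is extra bookkeeping you do not ultimately use, but it does no harm.

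The one genuine difference is in the positivity step. You go through Birkhoff's ergodic theorem and the Milnor--\v{S}varc quasi-isometry to get $L(x_0\cdots x_{n-1})\ge cn$; this is correct. The paper instead observes that $r$ is cohomologous to $n^{-1}r^n$ (since $r^n-nr$ is a telescoping sum of coboundaries) and that $r^n$ is strictly positive on $\Sigma$ for all sufficiently large $n$, whence $\int r\,d\mu = \int n^{-1}r^n\,d\mu>0$ for any invariant $\mu$. The paper's argument is a little more direct in that it avoids invoking the ergodic theorem and ergodic decomposition, but the underlying reason $r^n>0$ for large $n$ is exactly your linear-displacement estimate, so the two arguments are really two packagings of the same fact.
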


\begin{proof}
The analyticity of an isolated simple eigenvalue is standard.
Furthermore, $\beta(s) = \exp P(-sr)$, where $P$ is the pressure function for 
the shift $\sigma : \Sigma \to \Sigma$.  The formula for the derivative is then given in \cite{PP},
for example.
For any $n \geq 1$, $r$ is cohomologous to $n^{-1}r^n$ and, for $n$ sufficiently large,
$r^n$ is strictly positive. These observations prove positivity of the integral.
\end{proof}


\begin{corollary} \label{lem: beta(h)2}
There exists a unique positive real number \(h\) such that \(\beta(h)=1\).
\end{corollary}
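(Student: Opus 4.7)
The plan is to apply the intermediate value theorem to $\beta - 1$ after establishing that $\beta$ is strictly decreasing, that $\beta(0) > 1$, and that $\beta(s) \to 0$ as $s \to \infty$. Strict monotonicity is immediate from Lemma \ref{lem: beta_decreasing}: the formula $\beta'(s) = -\beta(s)\int r \, d\mu_{-sr}$ is strictly negative on $\mathbb{R}$ because $\beta(s) > 0$ (by Perron--Frobenius) and $\int r \, d\mu_{-sr} > 0$ (by the last assertion of that lemma). In particular $\beta$ is continuous and injective, so uniqueness of any root of $\beta(s) = 1$ comes for free.

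For $\beta(0)$, I would compute directly from the combinatorics of $A$. The matrix $B_0$ has entries in $\{0,1\}$ and its rows and columns are indexed by $W'_{N-1}$. Since both $x, y \in W'_{N-1}$ have length exactly $N-1$, only the second clause in the definition of $A$ can contribute: $B_0(x, y) = 1$ precisely when $x = x_0 x_1 \cdots x_{N-2}$ and $y = x_1 \cdots x_{N-2}\, y_{N-2}$ for some $y_{N-2} \in (\mathcal A \cup \mathcal A^{-1}) \setminus \{x_{N-2}^{-1}\}$. Hence every row contains exactly $2k-1$ ones, the all-ones vector is a right eigenvector with eigenvalue $2k-1$, and Perron--Frobenius identifies this as $\beta(0)$. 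Since $k \geq 2$, we get $\beta(0) = 2k-1 \geq 3 > 1$.

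For the behaviour at infinity, I would use $\beta(s) = \exp P(-sr)$ together with the variational principle
\[
P(-sr) = \sup_{\mu \in \mathcal M}\left(h(\mu) - s\int r \, d\mu\right) \leq h_{\mathrm{top}}(\sigma) - s \inf_{\mu \in \mathcal M}\int r \, d\mu,
\]
and show $\inf_\mu \int r \, d\mu > 0$. By Lemma \ref{lem: geodesiclength}, whenever $\mu$ is the uniform measure on a periodic orbit of period $n$, $\int r \, d\mu = r^n(x)/n = l(\gamma)/n$ for the corresponding closed geodesic $\gamma$; since $\gamma$ traverses $n$ edges, $l(\gamma)/n \geq \ell_0 := \min_{e \in E} l(e) > 0$. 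Because periodic orbit measures are weak*-dense in $\mathcal M$ for the topologically mixing subshift $\sigma$, the inequality $\int r \, d\mu \geq \ell_0$ passes to every $\mu \in \mathcal M$. Thus $P(-sr) \leq h_{\mathrm{top}}(\sigma) - s\ell_0 \to -\infty$, so $\beta(s) \to 0$, and the intermediate value theorem produces the desired $h > 0$.

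The only step that requires any thought is the uniform positive lower bound on $\int r \, d\mu$; everything else is a packaging of Perron--Frobenius and the pressure calculus already used in Lemma \ref{lem: beta_decreasing}. If the density argument feels too heavy, a self-contained alternative is to invoke the fact that $r^n$ is strictly positive on $\Sigma$ for $n$ large (since $r^n$ represents the finite-word length $L$ up to bounded error) and observe $\int r \, d\mu = (1/n)\int r^n \, d\mu \geq (1/n)\min_x r^n(x) > 0$ uniformly in $\mu$; this achieves the same bound without appealing to weak*-density.
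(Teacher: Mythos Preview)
Your overall framework---strict monotonicity from Lemma \ref{lem: beta_decreasing} plus the intermediate value theorem---is exactly what the paper does. The details of verifying $\beta(0)>1$ and $\beta(s)<1$ for large $s$ differ: the paper dispatches both in one line ``by comparing with the trace of $B_s^n$'', whereas you compute $\beta(0)=2k-1$ directly from the constant row sums of $B_0$ and bound $\beta(s)=e^{P(-sr)}$ from above via the variational principle. Both routes are valid; yours is more explicit and yields the exact value of $\beta(0)$, while the trace argument is more uniform and avoids invoking the pressure identity.

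One correction is needed in your approach (a) to the lower bound on $\int r\,d\mu$. The claim that the closed geodesic $\gamma$ associated to a period-$n$ orbit in $\Sigma$ ``traverses $n$ edges'' is not justified: the symbols in $\Sigma$ are free-group generators, not oriented edges of $G$, and a single generator generally corresponds to a loop in $G$ crossing several edges, so there is no reason for $l(\gamma)/n$ to be bounded below by $\min_{e\in E} l(e)$. Your alternative (b), however, is correct and is essentially the positivity input already noted in the proof of Lemma \ref{lem: beta_decreasing}: once $r^n>0$ for some $n$, compactness of $\Sigma$ gives $r^n\ge c>0$, hence $\int r\,d\mu=(1/n)\int r^n\,d\mu\ge c/n$ uniformly in $\mu\in\mathcal M$, and $P(-sr)\le h_{\mathrm{top}}(\sigma)-sc/n\to-\infty$. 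So the argument stands once route (a) is dropped in favour of (b).
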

\begin{proof}
By comparing with the trace of $B_s^n$, one can easily check that $\beta(0)>1$ and that 
$\beta(s)<1$ for sufficiently large $s$. By Lemma \ref{lem: beta_decreasing}, 
$\beta(s)$ is strictly decreasing, so the required number $h$ exists and is unique.
\end{proof}


We will also need to consider $B_s$ for $s \in \mathbb C$. In particular,
we have the following result.

\begin{lemma} \label{lem: Wielandtlem}
For $t \neq 0$, the matrix \(B_{h+it}\) does not have 1 as an eigenvalue.
\end{lemma}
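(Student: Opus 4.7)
The plan is a standard contradiction argument via Wielandt's theorem, reducing to the fact that summing $R$ around a cycle recovers the length of a closed geodesic and invoking the irrationality hypothesis through Lemma \ref{lem: geodesiclength}.

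First, I would observe that taking moduli entrywise gives $|B_{h+it}| = B_h$, since $|e^{-(h+it)R(x,y)}| = e^{-hR(x,y)}$. By Corollary \ref{lem: beta(h)2} the maximal Perron--Frobenius eigenvalue of the aperiodic matrix $B_h$ is $\beta(h) = 1$. Suppose for contradiction that $B_{h+it}$ has $1$ as an eigenvalue. Writing $1 = 1 \cdot e^{i \cdot 0}$, Wielandt's theorem (Theorem \ref{thm:Wielandt}) applied with $\theta = 0$ yields a diagonal matrix $D$, whose diagonal entries have modulus one, satisfying $B_{h+it} = D B_h D^{-1}$.

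Comparing entries where $A(x,y) = 1$, and writing $D(x,x) = e^{i u(x)}$ for a real-valued function $u$ on $W'_{N-1}$, this diagonal conjugation forces
\[
e^{-it R(x,y)} = e^{i(u(x) - u(y))},
\quad \text{i.e.,} \quad
tR(x,y) \equiv u(y) - u(x) \pmod{2\pi},
\]
for every allowed transition $(x,y)$. Next I would sum this congruence around a matrix cycle corresponding to a periodic orbit of $\sigma$. If $x = (x_i)_{i=0}^\infty \in \Sigma$ satisfies $\sigma^n x = x$, then setting $w_i = x_i x_{i+1} \cdots x_{i+N-2}$ (indices mod $n$) gives a closed loop $w_0 \to w_1 \to \cdots \to w_{n-1} \to w_0$ in the transition graph of $B_{h+it}$, and by Definition \ref{def: rformatrix} the weight on the edge $w_i \to w_{i+1}$ is exactly $r(\sigma^i x)$. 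Telescoping the congruence around this cycle eliminates the $u$-terms and yields
\[
t \sum_{i=0}^{n-1} r(\sigma^i x) = t \, r^n(x) \equiv 0 \pmod{2\pi}.
\]

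Finally, by Lemma \ref{lem: geodesiclength}, $r^n(x) = l(\gamma)$ is the length of the associated closed geodesic in $G$. So $t \cdot l(\gamma) \in 2\pi \mathbb{Z}$ for every closed geodesic $\gamma$, which means the set of lengths of closed geodesics is contained in the discrete subgroup $(2\pi/t)\mathbb{Z}$ of $\mathbb{R}$, contradicting the standing hypothesis on $G$. The main (minor) obstacle is just verifying bookkeeping in the correspondence between matrix cycles of $B_s$ and periodic $\sigma$-orbits so that the weights telescope correctly to the geodesic length; once this is in place, everything follows cleanly from Wielandt's theorem and the non-arithmeticity of the length spectrum.
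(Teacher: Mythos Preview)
Your argument is correct. The paper's own proof is terse: it invokes Lemma~\ref{lem: geodesiclength} to conclude that the set $\{r^n(x) : \sigma^n x = x,\ n \ge 1\}$ is not contained in a discrete subgroup of $\mathbb{R}$, and then cites Parry~\cite{parry1983analogue} for the passage from this non-arithmeticity to the absence of the eigenvalue~$1$. You instead carry out that passage directly via Wielandt's theorem, which the paper has already stated precisely for this purpose (Theorem~\ref{thm:Wielandt}); in effect you are unpacking the black-box citation with the elementary cohomological-equation argument. The bookkeeping you flag as the only concern is fine: for a periodic point $x$ with $\sigma^n x = x$, the words $w_i = x_i x_{i+1}\cdots x_{i+N-2}$ (indices mod $n$) lie in $W'_{N-1}$ and satisfy $A(w_i,w_{i+1})=1$, and Definition~\ref{def: rformatrix} gives $R(w_i,w_{i+1}) = r(\sigma^i x)$ on the nose, so the cycle sum is exactly $r^n(x)$. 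Your route is more self-contained; the paper's buys brevity at the cost of an external reference.
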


\begin{proof}
Lemma \ref{lem: geodesiclength} tells us \(\{r^n(x) \colon \sigma^nx=x, \ n \geq 1\}\) is not contained in a discrete subgroup of \(\R\). It then follows from \cite{parry1983analogue} that the result holds.
\end{proof}

For $s \in \mathbb R$, 
let \(u(s)\) and \(v(s)\) denote, respectively, the left and right 
eigenvectors of \(A_s\) associated to the eigenvalue \(\beta(s)\).  We choose to normalise 
\(u(s)\) so that each entry is strictly positive and scale the entries so that \((u(s))_{\emptyset}=1\). 
(Note that here the subscript $1$ denotes the entry index by the identity element
in $F$, i.e. the word of length zero.)
We normalise \(v(s)\) so that the entries \((v(s))_y\) for \(y\in W'_{N-1}\) are strictly positive, 
whilst all other entries are \(0\).  We then scale the entries of \(v(s)\) so that \(u(s)\cdot v(s)=1\).

We conclude this section by explaining why the number $h$, from Corollary \ref{lem: beta(h)2}, that gives $\beta(h)=1$ is equal to 
the volume entropy.
We will do this by sketching a proof of Guillop\'e's theorem.
This will also serve as a prelude to the analysis in the next section, where the arguments will 
be given in more detail.


To evaluate the asymptotic behaviour of \(\#\{x\in F\colon L(x)\le T\}\) we first establish analytic properties of the complex generating function
	\begin{equation*}
		\eta(s) = \sum_{n=1}^\infty \sum_{|x|=n} e^{-sL(x)}. 
	\end{equation*}
Letting \(\mathsf{1} = (1,\ldots, 1)\) and \(\mathsf{e}_{x}\) denote the standard unit vector associated to \(x\in W_{N-1}\), we can rewrite \(\eta(s)\) in terms of the matrices $A_s$:
	\begin{equation*}
		\eta(s) = \sum_{n=1}^\infty {\mathsf{1}} A_s^n \mathsf{e}_{\emptyset}.
	\end{equation*}
This converges absolutely for $\beta(\mathrm{Re}(s))<1$, i.e.\ for $\mathrm{Re}(s)>h$.


Since $\beta(s)$ is a simple eigenvalue, it varies analytically and we can show that 
\(\eta(s) = c_0/(s-h) + \phi_1(s)\) where \(\phi_1(s)\) is an analytic function in a neighbourhood of \(s = h\) and \(c_0 = -(v(s)\cdot \mathsf{1})/\beta'(h)>0\). Furthermore, using Lemma \ref{lem: Wielandtlem}, we may show
that \(\eta(s)\) has no further poles on the line \(\mathrm{Re}(s)=h\).


The generating function \(\eta(s)\) is related to the counting function
\(N(T) := \#\{x\in F \colon L(x)\le T\}\)
by the following Stieltjes integral:
	\begin{equation*} \label{eq: Stieltjes}
	 \eta(s) = \int_0^\infty e^{-sT} \, dN(T).
	\end{equation*}
This enables us to apply the Ikehara--Wiener Tauberian Theorem.
	\begin{theorem}[Ikehara--Wiener Tauberian Theorem
	(\cite{PP}, Theorem 6.7)] \label{thm: tauberian} Suppose that
	the function $\eta(s) = \int_0^\infty e^{-sT} \, dN(T)$ is analytic for \(\mathrm{Re}(s)> h\), has a simple pole at \(s=h\), and the function 
	\(\eta(s) - c_0/(s-h)\) has an analytic extension to a neighbourhood of 
	\(\mathrm{Re}(s)\ge h\).  Then
	 \begin{equation*}
	  N(T) \sim  \frac{c_0e^{hT}}{h}, \quad \mathrm{as } \ T \to \infty.
	 \end{equation*}
	\end{theorem}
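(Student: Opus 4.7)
The statement is the classical Wiener--Ikehara Tauberian theorem specialised to Stieltjes transforms of non-decreasing counting functions, so my plan is to execute the standard proof. Setting $\tilde N(T) = N(T) - (c_0/h)(e^{hT}-1)$, I would first translate the hypothesis into the statement that the Laplace--Stieltjes transform of $d\tilde N$ extends from $\mathrm{Re}(s) > h$ to a holomorphic function on a neighbourhood of the closed half-plane $\mathrm{Re}(s) \geq h$. The goal becomes proving $\tilde N(T) = o(e^{hT})$, which is equivalent to $N(T) \sim c_0 e^{hT}/h$.

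The heart of the argument is to test the difference $d\tilde N$ against a Fej\'er-type smoothing kernel. Specifically, for each $\lambda > 0$ and $T > 0$, I would form the integral
\[
J_\lambda(T) = \frac{1}{2\pi}\int_{-\lambda}^\lambda \left(1 - \frac{|t|}{\lambda}\right)\bigl(\eta(h+it) - \tfrac{c_0}{it}\bigr) e^{itT}\, dt,
\]
which, after unpacking the Fourier inversion, computes the convolution of $e^{-hu}\,dN(u) - c_0\,du$ against a Fej\'er kernel at scale $1/\lambda$. Because the integrand is continuous on the compact interval $[-\lambda,\lambda]$ by the analytic extension hypothesis, the Riemann--Lebesgue lemma yields $J_\lambda(T) \to 0$ as $T \to \infty$, for every fixed $\lambda$.

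The final and subtlest step is the passage from this averaged-in-$T$ convergence to a pointwise asymptotic for $N(T)$. Here monotonicity of $N$ is essential: a Fej\'er-convolved copy of $e^{-hu}N(u)$ can be sandwiched between translates of $e^{-hu}N(u)$ itself, so that the convergence $J_\lambda(T) \to 0$, first as $T \to \infty$ and then as $\lambda \to \infty$, forces $e^{-hT}N(T) \to c_0/h$. I expect this Tauberian step to be the main obstacle, since the Fourier-analytic ingredients are soft; the monotonicity of $N$, which is automatic in the present setting, is precisely the Tauberian hypothesis that converts the spectral information on $\eta$ into a pointwise asymptotic for $N(T)$, from which the desired conclusion $N(T) \sim c_0 e^{hT}/h$ follows immediately.
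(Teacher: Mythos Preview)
The paper does not prove this theorem at all: it is quoted verbatim as a black box from \cite{PP}, Theorem~6.7, and is simply applied to the generating functions $\eta(s)$ and $\eta_{\mathfrak C}(s)$ once their analytic properties have been established. So there is no ``paper's own proof'' to compare your proposal against.

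That said, your outline is a faithful sketch of the classical Wiener--Ikehara argument (Fej\'er smoothing on the critical line, Riemann--Lebesgue, then the monotonicity-based Tauberian sandwich to upgrade averaged convergence to pointwise convergence). If you want to include a proof rather than a citation, this is the right shape; just be aware that the details of the final sandwich step---getting from $J_\lambda(T)\to 0$ for each fixed $\lambda$ to $e^{-hT}N(T)\to c_0/h$---require some care with the double limit in $T$ and $\lambda$, and your write-up currently only gestures at it. For the purposes of this paper, however, citing \cite{PP} is entirely standard and no proof is expected.
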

As a consequence, we recover Guillop\'e's result that
	\begin{equation*}
	 \#\{x\in F \colon L(x) \le T\} \sim  ce^{hT} \quad \text{as } T\to\infty,
	\end{equation*}
for some $c>0$, 
and hence that the constant $h$ defined by $\beta(h)=1$ is the volume entropy.


\section{A Complex Generating Function}

Suppose that \(\mathfrak{C}\) is a non-trivial conjugacy class of the free group \(F\).  Let \(\mathfrak{C}_n\) denote the set of \(x\in\mathfrak{C}\) such that \(|x|=n\) and suppose that
	\begin{equation*}
		k = \min_{x\in \mathfrak{C}} |x|.
	\end{equation*}
  The set of \(g\in\mathfrak{C}_k\) are precisely those elements in \(\mathfrak{C}\) whose reduced word representations are cyclically reduced.  In fact, if \(g=g_1\cdots g_k \in\mathfrak{C}_k\) then all the remaining elements of \(\mathfrak{C}_k\) are given by cyclic permutations of the letters in \(g_1\cdots g_k\).

The reduced word representation of each \(x\in\mathfrak{C}\) takes the form
	\begin{equation*}
		x = w^{-1}gw = w_m^{-1} \cdots w_1^{-1} g_1 \cdots g_k w_1 \cdots w_m
	\end{equation*}
with \(g=g_1\cdots g_k \in \mathfrak{C}_k\); and \(w=w_1\cdots w_m\in W'_m\) subject to the 
restriction \(w_1 \neq g_1, g_k^{-1}\) in order that no pairwise cancellation occurs when 
concatenating \(w^{-1}\), \(g\) and \(w\). For a given \(g=g_1\cdots g_k\) we say that 
\(w\in W'_m(g)\) if \(w\in W'_m\) and \(w_1 \neq g_1, g_k^{-1}\). Clearly \(\mathfrak{C}_n\) is non-empty if and only if \(n = k + 2m\) for \(m\in\N\).

For \(x\in\mathfrak{C}\) with sufficiently long word length, the following lemma gives a useful decomposition of \(L(x)\).
	\begin{lemma}  Suppose that \(x\in\mathfrak{C}\) such that \(x=w^{-1}gw\) here \(g\in\mathfrak{C}_k\) and \(w\in W'_m(g)\) with \(m\ge N-1\).  Let \(y = w_1\cdots w_{N-1}\) then
		\begin{equation*} L(x) = L(w^{-1} g w) = L(y^{-1}g y) -2L(y) + 2L(w).
		\end{equation*}
	\end{lemma}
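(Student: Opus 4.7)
My plan is to reduce the identity to a statement about sums of the local function $r$ over $N$-letter windows. Iterating the relation $L(u_1 \cdots u_n) = r(u_1 \cdots u_n) + L(u_2 \cdots u_n)$ from the definition of $r$ together with Lemma \ref{lem: Linr}, and using Lemma \ref{lem: ntoN} to replace each $r$-term whose tail has length at least $N$ by $r$ of its first $N$ letters, yields the following key decomposition: for any reduced word $u_1 \cdots u_n$ with $n \geq N$,
\[
L(u_1 \cdots u_n) = \sum_{i=1}^{n-N+1} r(u_i u_{i+1} \cdots u_{i+N-1}) + L(u_{n-N+2} \cdots u_n).
\]
Thus $L$ of any long reduced word is a sum of local contributions on $N$-letter windows plus $L$ applied to the last $N-1$ letters.

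I then apply this decomposition to $L(w^{-1}gw)$, whose reduced word is $w_m^{-1} \cdots w_1^{-1} g_1 \cdots g_k w_1 \cdots w_m$, and partition the windows into four regions: (A) entirely inside the $w^{-1}$-block, (B) straddling the $w^{-1}/g$-boundary, (C) starting in the $g$-block (possibly crossing into the $w$-block), and (D) entirely inside the $w$-block. Denoting the partial sums $S_A, S_B, S_C, S_D$,
\[
L(w^{-1}gw) = S_A + S_B + S_C + S_D + L(w_{m-N+2} \cdots w_m).
\]
Performing the same decomposition on $L(y^{-1}gy)$, where the $w^{-1}$- and $w$-blocks have length exactly $N-1$, I observe that regions (A) and (D) are empty while regions (B) and (C) yield sums identical to $S_B$ and $S_C$: every such window only involves the last $N-1$ letters of $w^{-1}$, the full $g$-block, and the first $N-1$ letters of $w$, and these coincide with the corresponding letters of $y^{-1}gy$ since $y = w_1 \cdots w_{N-1}$. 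The tail term becomes $L(y)$, giving
\[
L(y^{-1}gy) = S_B + S_C + L(y).
\]

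To identify $S_A$ and $S_D$ independently, I apply the same decomposition to $L(w) = L(w_1 \cdots w_m)$ (whose windows are exactly those of region (D)), obtaining $L(w) = S_D + L(w_{m-N+2} \cdots w_m)$; and to $L(w^{-1}) = L(w_m^{-1} \cdots w_1^{-1})$, using that the isometric action of $F$ on $\mathcal T$ forces $L(x^{-1}) = L(x)$. After reindexing, its windows are exactly region (A), giving $L(w) = S_A + L(y)$. Subtracting the decomposition of $L(y^{-1}gy)$ from that of $L(w^{-1}gw)$ and substituting these two identities,
\[
L(w^{-1}gw) - L(y^{-1}gy) = S_A + S_D + L(w_{m-N+2} \cdots w_m) - L(y) = 2L(w) - 2L(y),
\]
which rearranges to the claimed identity.

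The main obstacle is the combinatorial bookkeeping in matching windows across the $g$-block in the two decompositions. One must verify, window by window, that the $N$-letter blocks appearing in region (B) for $L(w^{-1}gw)$ exactly reproduce those in $L(y^{-1}gy)$, and similarly for (C); this depends on $y$ being the length-$(N-1)$ prefix of $w$ together with the restriction $w_1 \neq g_1, g_k^{-1}$ guaranteeing $w^{-1}gw$ is reduced. One should also check the degenerate case $m = N-1$ (where $w = y$ and the identity is trivial) and small $k$, where the index ranges describing regions (B), (C), (D) may overlap but the formulas remain consistent.
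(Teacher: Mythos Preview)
The paper states this lemma without proof, so there is no argument to compare against directly. Your proof is correct and is exactly the natural argument the authors are tacitly relying on: it uses only Lemma~\ref{lem: ntoN}, Definition~\ref{def: r}, Lemma~\ref{lem: Linr}, and the symmetry $L(x^{-1})=L(x)$ coming from the isometric action.

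A few minor remarks. Your window bookkeeping is sound even in the edge cases you flag: since every window in regions (B) and (C) starts at position at least $m-N+2$ and ends at position at most $m+k+N-1$, it only touches the last $N-1$ letters of $w^{-1}$ and the first $N-1$ letters of $w$, which are precisely the letters of $y^{-1}$ and $y$; this handles small $k$ (including windows that span all three blocks) without any separate argument. The identity $L(x^{-1})=L(x)$ follows immediately from $d_{\mathcal T}(o,ox^{-1}) = d_{\mathcal T}(ox,o)$, so your reduction of $S_A$ to $L(w)-L(y)$ is justified. When $m=N-1$ both $S_A$ and $S_D$ are empty sums and the identity is trivial, as you note.
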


  In order to study the counting problem in the conjugacy class $\mathfrak C$, 
  we define a generating function
	\begin{equation*}
		\eta_\mathfrak{C}(s) = \sum_{x\in \mathfrak{C}} e^{-sL(x)}.
	\end{equation*}
We use the reduced word representation of elements in \(\mathfrak{C}\) to rewrite \(\eta_\mathfrak{C}(s)\) as follows:
	\begin{align*}
		\eta_\mathfrak{C}(s) &= \sum_{n=1}^\infty \sum_{x\in \mathfrak{C}_n} e^{-sL(x)} \\
		&= \sum_{g\in\mathfrak{C}_k} \sum_{y \in W'_{N-1}(g)} e^{-s(L(y^{-1}gy) - 2L(y))} \sum_{m= N-1}^\infty \sum_{\substack{w\in W'_m \\ y = w_1\cdots w_{N-1}}} e^{-2sL(w)} + \phi(s)
	\end{align*}
where \(\phi(s)\) is an entire function. We may write \(\eta_\mathfrak{C}(s)\) in terms of \(A_{2s}\) by using
	\begin{equation*}
	\sum_{m=N-1}^\infty \sum_{\substack{w\in W'_m \\ y=w_1\cdots w_{N-1} }} e^{-2sL(w)} = \sum_{m=N-1}^\infty \mathsf{e}_y A_{2s}^m \mathsf{e}_\emptyset.
	\end{equation*}

We will need the following classical result from linear algebra.

	\begin{lemma} \label{lem: c=u.v}  Let \(M\) be a \(d\times d\) matrix with real entries.  Suppose that \(M\) has a simple eigenvalue \(\beta\), and that $u$ and \(v\) 
	are the associated left and right eigenvectors, normalised so that $u \cdot v=1$.  
	Then $w \in \mathbb R^d$ can be written
	$w = (u \cdot w)v + \overline{v}$, where $\overline{v}$ is in the span 
	of generalised right eigenvectors of \(M\) not associated to \(\beta\).	
	\end{lemma}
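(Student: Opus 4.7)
The plan is to prove this via the primary decomposition / Jordan decomposition of $M$. Since $\beta$ is a simple eigenvalue, its generalised eigenspace is exactly the one-dimensional line $\mathbb R v$. I would begin by invoking the standard fact that $\mathbb R^d$ (or $\mathbb C^d$, extending scalars as needed and noting that the $\beta$-eigenspace is defined over $\mathbb R$) decomposes as a direct sum
\[
\mathbb R^d = \mathbb R v \oplus W,
\]
where $W$ is the direct sum of the generalised eigenspaces of $M$ associated to eigenvalues different from $\beta$. Hence any $w \in \mathbb R^d$ has a unique representation $w = \alpha v + \overline{v}$ with $\alpha \in \mathbb R$ and $\overline{v} \in W$.

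It then remains to identify the scalar $\alpha$ as $u \cdot w$. Taking the inner product with $u$ gives
\[
u \cdot w = \alpha\,(u\cdot v) + u\cdot \overline{v} = \alpha + u\cdot \overline{v},
\]
so the claim reduces to showing that $u$ annihilates $W$. For this, I would take any generalised right eigenvector $v'$ associated to some eigenvalue $\lambda \neq \beta$, so that $(M - \lambda I)^k v' = 0$ for some $k \geq 1$. Using that $u$ is a left $\beta$-eigenvector, iterating $u M = \beta u$ gives $u(M - \lambda I)^k = (\beta - \lambda)^k u$, and therefore
\[
(\beta - \lambda)^k (u\cdot v') = u\cdot (M-\lambda I)^k v' = 0.
\]
Since $\beta \neq \lambda$, this forces $u \cdot v' = 0$. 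Extending by linearity, $u\cdot \overline{v} = 0$ for every $\overline{v}\in W$, which yields $\alpha = u\cdot w$, as required.

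There is no real obstacle here; the only mild subtlety is to be clear about working over $\mathbb C$ when some of the other eigenvalues are non-real, and to note that the resulting projector $w \mapsto (u\cdot w)v$ nevertheless maps $\mathbb R^d$ to itself because $\beta$, $u$, and $v$ are all real. I would mention this in a single sentence to avoid ambiguity, but otherwise the proof is a direct application of the primary decomposition together with the orthogonality relation between left and right generalised eigenvectors for distinct eigenvalues.
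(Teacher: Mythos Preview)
Your proof is correct and follows essentially the same approach as the paper: both invoke the Jordan/primary decomposition $\mathbb R^d = \mathbb R v \oplus W$ with $W$ the span of the other generalised eigenvectors, and then use the orthogonality of $u$ to $W$ to identify the coefficient of $v$ as $u\cdot w$. The paper's proof simply asserts this orthogonality without justification, whereas you supply the standard argument via $u(M-\lambda I)^k = (\beta-\lambda)^k u$; your added remark about extending scalars to $\mathbb C$ when non-real eigenvalues are present is a welcome clarification that the paper omits.
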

	
\begin{proof}
Let $\{v\} \cup \mathcal S$ be a Jordan basis for $M$. Then
$\mathbb R^d = \mathbb R v \oplus \mathrm{span}(\mathcal S)$ and $u$ is
orthogonal to each element of $\mathcal S$. The result follows.
\end{proof}

	 	\begin{proposition} \label{prop: countingetaanalytic}
 	 The generating function \(\eta_{\mathfrak C}(s)\) is analytic for $\mathrm{Re}(s)>h/2$, has a simple pole at \(s=h/2\) with positive residue and, apart from this, has an analytic extension
to a neighbourhood of $\mathrm{Re}(s) \geq h/2$. 
 	\end{proposition}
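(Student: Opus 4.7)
The plan is to analyze the inner matrix sum in the explicit formula for $\eta_\mathfrak{C}(s)$ derived just above the statement, using the spectral theory of $A_{2s}$ from Section 3. Since $\phi(s)$ is entire and the outer double sum over $g \in \mathfrak{C}_k$ and $y \in W'_{N-1}(g)$ is a finite sum of analytic terms, it suffices to understand, for each fixed $y \in W'_{N-1}$, the function
\[
F_y(s) := \sum_{m=N-1}^\infty \mathsf{e}_y A_{2s}^m \mathsf{e}_\emptyset
\]
in a neighborhood of the line $\mathrm{Re}(s) = h/2$.

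I would first apply Lemma \ref{lem: c=u.v} to $A_{2s}$ with its simple eigenvalue $\beta(2s)$ (simplicity persists for complex $s$ near real values by analytic perturbation), writing $\mathsf{e}_\emptyset = (u(2s))_\emptyset v(2s) + w(s)$ with $w(s)$ in the complementary spectral subspace. Summing the geometric series then gives
\[
F_y(s) = (u(2s))_\emptyset (v(2s))_y \frac{\beta(2s)^{N-1}}{1 - \beta(2s)} + H_y(s),
\]
where $H_y(s)$ encodes the contribution of the non-Perron spectrum. By the Perron--Frobenius Theorem applied to $B_h$, every non-Perron eigenvalue of $A_h$ has modulus strictly less than $1$, so by continuity of spectra the function $H_y(s)$ is analytic on a neighborhood of $s = h/2$. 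Because $\beta(h) = 1$ and $\beta'(h) < 0$ by Lemma \ref{lem: beta_decreasing}, the explicit term contributes a simple pole at $s = h/2$. Summing over $g$ and $y$ and using the normalization $(u(h))_\emptyset = 1$, the residue of $\eta_\mathfrak{C}(s)$ at $s = h/2$ computes to
\[
\frac{1}{-2\beta'(h)} \sum_{g \in \mathfrak{C}_k} \sum_{y \in W'_{N-1}(g)} e^{-h(L(y^{-1}gy) - 2L(y))/2} (v(h))_y,
\]
which is strictly positive since $\beta'(h) < 0$ and $(v(h))_y > 0$ for each $y \in W'_{N-1}$.

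For the extension across the remainder of the critical line I would use the resolvent identity $\sum_{m=N-1}^\infty A_{2s}^m = A_{2s}^{N-1}(I - A_{2s})^{-1}$, valid on the region of absolute convergence $\mathrm{Re}(s) > h/2$ and extending meromorphically wherever $I - A_{2s}$ is invertible. For $s_0 = h/2 + it$ with $t \neq 0$, Lemma \ref{lem: Wielandtlem} says $1$ is not an eigenvalue of $B_{h + 2it}$; since $A_s$ and $B_s$ share the same non-zero spectrum, $1$ is not an eigenvalue of $A_{h + 2it}$ either, so $(I - A_{2s})^{-1}$ is analytic at $s_0$ and $F_y(s)$ extends analytically there. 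Combining the Jordan decomposition near $s = h/2$ with the resolvent extension along the rest of the line yields the claimed analytic behavior of $\eta_\mathfrak{C}(s) - C/(s - h/2)$ in a neighborhood of $\mathrm{Re}(s) \geq h/2$.

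The main obstacle is reconciling the two local representations of $F_y(s)$ so as to isolate a single simple pole at $s = h/2$ without introducing spurious singularities elsewhere on the critical line; this reconciliation rests entirely on Lemma \ref{lem: Wielandtlem}, which in turn encodes the hypothesis via Lemma \ref{lem: geodesiclength} that the closed-geodesic lengths are not contained in a discrete subgroup of $\mathbb{R}$.
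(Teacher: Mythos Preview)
Your argument is correct and follows the same skeleton as the paper's proof: the Jordan decomposition of $\mathsf{e}_\emptyset$ via Lemma~\ref{lem: c=u.v} near $s=h/2$, the computation of the residue from $\beta(h)=1$ and $\beta'(h)<0$ via Lemma~\ref{lem: beta_decreasing}, and the appeal to Lemma~\ref{lem: Wielandtlem} to rule out further poles on $\mathrm{Re}(s)=h/2$.

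The one genuine difference is how you handle points $h/2+it$ with $t\neq 0$. The paper invokes Wielandt's Theorem to split into two cases---either $\mathrm{spr}(A_{h+2it})=1$, in which case the simple eigenvalue $\beta(2s)$ persists analytically and the geometric-series formula applies again, or $\mathrm{spr}(A_{h+2it})<1$, in which case the series converges outright---and then uses Lemma~\ref{lem: Wielandtlem} only inside the first case. You instead pass directly to the resolvent identity $F_y(s)=\mathsf{e}_y A_{2s}^{N-1}(I-A_{2s})^{-1}\mathsf{e}_\emptyset$, valid on $\mathrm{Re}(s)>h/2$ and extending analytically wherever $1\notin\mathrm{spec}(A_{2s})$, and apply Lemma~\ref{lem: Wielandtlem} uniformly. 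Your route is slightly more economical (no case split, no need for analytic perturbation of $\beta$ away from $h/2$), while the paper's route makes the role of the Perron eigenvalue more visible along the whole critical line; both rest on exactly the same spectral input.
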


	\begin{proof}
For $\sigma \in \mathbb R$, $A_{2\sigma}$ has spectral radius $\beta(2\sigma)$. 
Since this is strictly decreasing and $\beta(h)=1$, 
it is clear that 
$\sum_{m={N-1}}^\infty \mathsf{e}_y^\text{T} A_{2\sigma}^m \mathsf{e}_{\emptyset}$
converges for $\sigma > h/2$ and hence that, for $s \in \mathbb C$, $\eta_{\mathfrak C}(s)$
is analytic for $\mathrm{Re}(s)>h/2$.


We now consider the analyticity of \(\eta(s)\) for \(s\) in a neighbourhood of 
\(h/2 +it \in \mathbb{C}\), for an arbitrary $t \in \mathbb R$. 
We will let $\mathrm{spr}(M)$ denote the spectral radius of a matrix $M$. 
By Wielandt's Theorem (Theorem \ref{thm:Wielandt}), either 
\begin{enumerate}
\item
$\mathrm{spr}(A_{h+2it}) = \mathrm{spr}(B_h)=1$, in which case 
$A_{h+2it}$ has a simple
eigenvalue $\beta(h+2it)$ with $|\beta(h+2it)|=\beta(h)=1$ and such that the 
remaining eigenvalues are strictly smaller in modulus; 
or 
\item
$\mathrm{spr}(A_{h+2it}) < \mathrm{spr}(B_h)=1$. 
\end{enumerate}
When (1) holds, standard eigenvalue perturbation theory gives that the simple eigenvalue
$\beta(2s)$ persists and is analytic for $s$ in a neighbourhood of $h+it$, as are the corresponding 
left and right eigenvectors ${u}(2s)$ and ${v}(2s)$ \cite{Kato}.
By Lemma \ref{lem: c=u.v} and recalling that $({u}(2s))_{\emptyset}=1$, we have
$
\mathsf{e}_{\emptyset} = {v}(2s) + \overline{v}(2s)
$,
where \(\overline{v}(2s)\) is a vector in the subspace spanned by the generalised eigenvectors associated to the non-maximal eigenvalues of $A_{2s}$. 
Thus 
we have
	\begin{align*}
	 \sum_{m={N-1}}^\infty \mathsf{e}_y^\text{T} A_{2s}^m \mathsf{e}_{\emptyset} 
	 &= \sum_{m=N-1}^\infty (\mathsf{e}_y \cdot {v}(2s)) \beta(2s)^m + \sum_{m=N-1}^\infty \mathsf{e}_y^\text{T} A_{2s}^m\overline{v}(2s)  \\
	 &= \frac{(\mathsf{e}_y \cdot {v}(2s)) \beta(2s)^{N-1}}{1 - \beta(2s)}
	 +\phi_2(s),
	\end{align*}
where $\phi_2(s)$ is analytic in a neighbourhood of $h/2+it$.
Therefore, $\eta_{\mathfrak C}(s)$ is analytic in a neighbourhood of $h/2+it$ unless
$\beta(h+2it)=1$. 
Lemma  \ref{lem: Wielandtlem} tells us that this only occurs when 
$t=0$.
When (2) holds, we immediately obtain that 
$\sum_{m={N-1}}^\infty \mathsf{e}_v^\text{T} A_{2s}^m \mathsf{e}_{\emptyset}$ converges and 
hence that $\eta_{\mathfrak C}(s)$ converges to an analytic function for $s$ in a neighbourhood of 
$h/2+it$. 

For $s$ in a neighbourhood of $h/2$, we have that, modulo an analytic function,
\begin{equation*}
\eta_{\mathfrak C}(s) = \sum_{g\in \mathfrak C_k} \sum_{y\in W'_{N-1}(g)} 
		e^{-s(L(y^{-1}gy)-2L(y))} \frac{(\mathsf{e}_y \cdot {v}(2s)) \beta(2s)^{N-1}}{1 - \beta(2s)}.
\end{equation*}
From the analyticity of $\beta$ and the fact that $\beta(h)=1$, we obtain
that, in a neighbourhood of $h/2$,
\[
\eta_{\mathfrak C}(s) = \frac{c}{s-h/2} + \phi_3(s),
\]
where $\phi_3(s)$ is analytic and $c>0$. The latter holds because
${\mathsf e}_y \cdot {v}(h) >0$, for each $y \in \bigcup_{g \in \mathfrak C_k} W'_{N-1}(g)$, and, by Lemma 
\ref{lem: beta_decreasing}, $-1/(2\beta'(h))>0$.

Combining the above observations, we have that $\eta_{\mathfrak C}(s)$ is analytic
for $\mathrm{Re}(s)>h/2$ and, apart from a simple pole at $s=h/2$, has an analytic extension to a 
neighbourhood of $\mathrm{Re}(s) \geq h/2$. Furthermore, the residue at the simple pole is positive.
\end{proof}

Since \(\eta_{\mathfrak C}(s)\) is related to the counting function \(N_{\mathfrak C}(T) = \#\{ x\in \mathfrak C \colon d_{\mathcal{T}}(o,ox) \le T\} \) via the Stieltjes integral
	\begin{equation*}
		\eta_{\mathfrak C}(s) = \int_0^\infty e^{-sT} \, dN_{\mathfrak C}(T),
	\end{equation*}
 we may apply the Ikehara--Wiener Tauberian Theorem (Theorem \ref{thm: tauberian}) to conclude that
	\begin{equation*}
		N_{\mathfrak C}(T) \sim C\frac{e^{h T/2}}{h/2},
	\end{equation*}
for some $C>0$. This completes the proof of Theorem \ref{thm: mainresult}.

%
%

%
\section{Error terms}

In this final section we discuss the error terms which may appear when estimating 
$N_{\mathfrak C}(T)$. We first note the following, which may be deduced from the analysis above and 
the arguments in \cite{Pol} (Propositions 6 and 7).

\begin{proposition}
There is never an exponential error term in Theorem \ref{thm: mainresult}, i.e.\ for no 
$\epsilon>0$ do we have
$N_{\mathfrak C}(T) = Ce^{hT/2} + O(e^{(h-\epsilon)T/2})$.
\end{proposition}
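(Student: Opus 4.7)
The plan is to argue by contradiction and to exhibit poles of $\eta_{\mathfrak C}(s)$ with real parts arbitrarily close to the critical line $\mathrm{Re}(s)=h/2$ from the left. Suppose for some $\epsilon>0$ that
\[
N_{\mathfrak C}(T) = Ce^{hT/2} + E(T) \quad \text{with } |E(T)|=O(e^{(h-\epsilon)T/2}).
\]
First I would use the Stieltjes representation to write
\[
\eta_{\mathfrak C}(s) - \frac{c}{s-h/2} = \int_0^\infty e^{-sT}\, dE(T),
\]
where $c>0$ is the residue computed in Proposition \ref{prop: countingetaanalytic}. The exponential bound on $E(T)$ makes the right-hand integral converge absolutely and analytically in the strictly larger half-plane $\mathrm{Re}(s)>h/2-\epsilon/2$, so the assumed error term forces $\eta_{\mathfrak C}(s)$ to extend holomorphically to this half-plane, apart from the known simple pole at $s=h/2$.

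The contradiction will come from producing genuine singularities of $\eta_{\mathfrak C}$ with real parts tending to $h/2$. From Section 4, modulo an entire function, $\eta_{\mathfrak C}(s)$ is a finite combination of matrix-geometric series $\sum_{m\ge N-1}\mathsf{e}_y^{\mathrm T} A_{2s}^m \mathsf{e}_{\emptyset}$. The pole set of such a series is contained in the zero set of the entire function $F(\lambda):=\det(I-B_\lambda)$ evaluated at $\lambda=2s$. It thus suffices to show that $F$ has infinitely many zeros $\lambda_n$ with $\mathrm{Re}(\lambda_n)<h$ and $\mathrm{Re}(\lambda_n)\to h$; each such zero contributes a pole of $\eta_{\mathfrak C}$ at $s_n=\lambda_n/2$ with $\mathrm{Re}(s_n)\to h/2$, in conflict with the analytic extension above.

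This accumulation of zeros is precisely the content of Propositions 6 and 7 of \cite{Pol}, adapted to the family $B_\lambda$, which plays the role of a symbolic transfer operator for the shift $\sigma:\Sigma\to\Sigma$ with H\"older potential $-\lambda r$. The essential input is the non-lattice condition on the periodic sums $\{r^n(x):\sigma^n x=x\}$, supplied by Lemma \ref{lem: geodesiclength}. Pollicott's strategy combines a Jensen/Rouch\'e-type counting of the zeros of $F$ on vertical strips $\{\mathrm{Re}(\lambda)=h-\delta\}$ with asymptotic upper and lower bounds on $\log|F(\lambda)|$ derived from the spectrum of $B_\lambda$; the non-lattice hypothesis ensures that, although $\mathrm{spr}(B_{h+it})<1$ for $t\neq 0$, non-leading eigenvalues of $B_\lambda$ nevertheless cross the value $1$ infinitely often as $|\mathrm{Im}(\lambda)|\to\infty$ along any vertical strip just to the left of $\mathrm{Re}(\lambda)=h$.

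The hard part is this final zero-accumulation step, which I would handle by quoting \cite{Pol} almost verbatim after checking that its standing hypotheses (aperiodicity of $|B_\lambda|$, H\"older regularity of $r$, and the non-lattice condition) are all available in our setting. The remaining ingredients—passage from the postulated error term to an analytic continuation via Stieltjes integration, and the identification of the pole set of $\eta_{\mathfrak C}$ with zeros of $F$—are routine consequences of the machinery already built in Sections 3 and 4.
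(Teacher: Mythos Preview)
Your approach is essentially the paper's own: the paper's entire ``proof'' is the one-line remark that the proposition follows from the analysis in Sections~3--4 together with Propositions~6 and~7 of \cite{Pol}, and you have correctly reconstructed what that means---pass from a hypothetical exponential error term to an analytic continuation of $\eta_{\mathfrak C}$ across the critical line via the Stieltjes representation, then invoke Pollicott's accumulation-of-singularities result (which uses the non-lattice hypothesis supplied by Lemma~\ref{lem: geodesiclength}) to obtain a contradiction.

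One point in your sketch is logically loose and worth tightening. You first (correctly) note that the poles of $\eta_{\mathfrak C}$ are \emph{contained} in the zero set of $F(\lambda)=\det(I-B_\lambda)$, but then assert that ``each such zero contributes a pole of $\eta_{\mathfrak C}$''. The containment goes the wrong way for that conclusion: a priori the particular linear combination $\sum_{g,y} e^{-s(L(y^{-1}gy)-2L(y))}\,\mathsf e_y^{\mathrm T}A_{2s}^{N-1}(I-A_{2s})^{-1}\mathsf e_{\emptyset}$ could be regular at a zero of $F$ if the relevant spectral projection happens to annihilate the vectors involved. To close the argument you must check that for (at least infinitely many of) the zeros $\lambda_n$ produced by Pollicott's method the residue of $\eta_{\mathfrak C}$ is nonzero---equivalently, that the left and right eigenvectors of $B_{\lambda_n}$ at the eigenvalue~$1$ pair nontrivially with the specific vectors arising from $\mathsf e_y$ and $\mathsf e_{\emptyset}$ through the block structure of $A_{2s}$. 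This verification is routine once one unwinds the structure of $A_s$ (the descent from $W'_{N-1}$ to the empty word through the nilpotent block) and is implicit in the adaptation of \cite{Pol} the paper has in mind, but it should not be asserted for free.
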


A more interesting problem is to ask when there is a polynomial error term for $N_{\mathfrak C}(T)$.
Recall that an irrational number $\alpha$ is said to be {\it Diophantine} if there 
exist $c>0$ and $\beta >1$ such that
$|q\alpha-p| \geq cq^{-\beta}$ for all $p,q \in \mathbb Z$, $q>0$.
We have the following.

\begin{theorem} \label{thm: polyerror}
Suppose that \(G\) is a finite connected metric graph 
such that the degree of each vertex is at least $3$.
Suppose also that  $G$ contains two closed geodesics 
$\gamma$ and $\gamma'$ such that
$l(\gamma)/l(\gamma')$ is Diophantine.
Then there exists $\delta>0$ such that
\begin{align*}
N_{\mathfrak C}(T) = Ce^{hT/2} + O(e^{hT/2} T^{-\delta}).
\end{align*}
\end{theorem}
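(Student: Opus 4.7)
The plan is to upgrade Proposition~\ref{prop: countingetaanalytic} from ``analytic extension to a neighbourhood of $\mathrm{Re}(s) \geq h/2$'' to ``meromorphic extension to a polynomial strip $\mathrm{Re}(s) > h/2 - c(1+|\mathrm{Im}(s)|)^{-\alpha}$ with polynomial growth bounds on $\eta_{\mathfrak{C}}(s)$,'' and then feed this into a quantitative (contour--shifting) Tauberian theorem in the style of Pollicott--Sharp. The error exponent $\delta$ in the conclusion will emerge as an explicit function of $\alpha$ and of the polynomial bound.

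The first and main step is the quantitative spectral estimate: there exist constants $c_1,\alpha>0$ such that for every $|t|\geq 1$,
\[
\mathrm{spr}\bigl(B_{h+2it}\bigr) \;\leq\; 1 - c_1 |t|^{-\alpha}.
\]
Lemma~\ref{lem: geodesiclength} identifies the periods $r^n(x)$ (over periodic points) with lengths of closed geodesics in $G$, so the Diophantine hypothesis on $l(\gamma)/l(\gamma')$ forces a quantitative non-arithmeticity of the period spectrum: no linear combination $p\,l(\gamma) + q\,l(\gamma')$ can be too close to a multiple of $2\pi/t$ in a way controlled by a power of $|t|$. Because $r$ is locally constant, $B_s$ is literally a finite matrix (a transfer operator on a finite alphabet), so one does not need the full Dolgopyat inductive $L^2$ scheme; instead, one can argue directly that if $B_{h+2it}v = \lambda v$ with $|\lambda|$ too close to $1$, then iterating and comparing entries of $B_{h+2it}^n$ with those of $B_h^n$ on loops realising $\gamma$ and $\gamma'$ produces two approximately aligned phases $e^{it\,l(\gamma)}$ and $e^{it\,l(\gamma')}$, forcing $t\,l(\gamma)$ and $t\,l(\gamma')$ to be simultaneously close to multiples of $2\pi$; this violates the Diophantine assumption unless the gap is at least $c_1|t|^{-\alpha}$.

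The second step is routine given the first. Standard analytic perturbation theory (as used in the proof of Proposition~\ref{prop: countingetaanalytic}) allows one to track the eigenvalue $\beta(2s)$ of $A_{2s}$ and its spectral projector on an enlarged domain: wherever the spectral gap bound above is in force, $\beta(2s)$ and the corresponding eigenvectors $u(2s), v(2s)$ extend analytically, and the decomposition
\[
\mathsf{e}_{\emptyset} \;=\; v(2s) + \overline{v}(2s)
\]
continues to yield
\[
\sum_{m=N-1}^{\infty} \mathsf{e}_y^{\mathrm{T}} A_{2s}^{m}\mathsf{e}_{\emptyset}
\;=\; \frac{(\mathsf{e}_y \cdot v(2s))\,\beta(2s)^{N-1}}{1-\beta(2s)} + \phi_2(s,t),
\]
with $\phi_2(s,t)$ holomorphic and of at most polynomial growth in $|t|$ in the enlarged strip. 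Summing against the finite collection of $y \in W'_{N-1}(g)$ and $g \in \mathfrak{C}_k$ shows that $\eta_{\mathfrak{C}}(s) - c/(s-h/2)$ extends holomorphically to $\{\mathrm{Re}(s) > h/2 - c_2(1+|\mathrm{Im}(s)|)^{-\alpha'}\}$ with $|\eta_{\mathfrak{C}}(s)| = O((1+|\mathrm{Im}(s)|)^{M})$ for some $M>0$.

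The third step is a quantitative Tauberian argument. Writing $N_{\mathfrak{C}}(T)$ as the inverse Mellin--Stieltjes transform of $\eta_{\mathfrak{C}}(s)$ and shifting the contour from $\mathrm{Re}(s) = h/2 + \epsilon$ into the zero-free strip, the simple pole at $s=h/2$ contributes $C e^{hT/2}$ and the shifted contour contributes, after optimising the depth of the shift as a function of $T$, an error of order $e^{hT/2} T^{-\delta}$ for $\delta = \delta(\alpha',M) > 0$. (This is exactly the scheme carried out in Pollicott--Sharp for similar counting problems; the optimisation balances the decay gained from $\mathrm{Re}(s)-h/2<0$ against the polynomial growth $|t|^{M}$.) The clear main obstacle is Step~1: quantifying the spectral gap from the Diophantine hypothesis. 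The other steps are essentially mechanical once that bound is in hand.
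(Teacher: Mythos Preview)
Your three-step plan---quantitative spectral input, analytic extension of $\eta_{\mathfrak C}$ to a polynomial strip with polynomial growth, contour-shifting Tauberian---is exactly the paper's strategy, and you have correctly located the only non-routine step. The Tauberian part is carried out in the paper via the Ingham auxiliary functions $\psi_k$ and a contour of width $(\log T)^\varepsilon$, which is the same scheme you allude to.

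The gap is in your proposed shortcut for Step~1. The Wielandt-type argument you sketch does \emph{not} force $t\,l(\gamma)$ and $t\,l(\gamma')$ to be near $2\pi\mathbb Z$ separately. If $B_{h+2it}$ has an eigenvalue $\lambda$ with $|\lambda|$ close to $1$, what one obtains (quantifying Wielandt) is that $2t\,l(\gamma)+n_\gamma\arg\lambda$ and $2t\,l(\gamma')+n_{\gamma'}\arg\lambda$ are each close to $2\pi\mathbb Z$, where $n_\gamma,n_{\gamma'}$ are the combinatorial periods. Eliminating $\arg\lambda$ leaves the single condition that $t\bigl(n_{\gamma'}l(\gamma)-n_\gamma l(\gamma')\bigr)$ is close to $\pi\mathbb Z$; since $n_{\gamma'}l(\gamma)-n_\gamma l(\gamma')$ is a fixed real number, this is satisfied for an arithmetic progression of $t$'s and yields no uniform gap. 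Two orbits fed through a spectral-radius argument are simply not enough, and the Diophantine hypothesis on $l(\gamma)/l(\gamma')$ is never actually invoked by your sketch.

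The paper does not attempt a direct spectral-radius bound; instead it imports Dolgopyat's estimate on \emph{iterates} $\|A_{\varsigma+it}^{2\nu m}\| \leq C_1|t|\beta(\varsigma)^{2\nu m}(1-|t|^{-\tau})^{m-1}$ (with $\nu\sim\log|t|$), which genuinely uses the Diophantine condition on the two periods through Dolgopyat's cancellation mechanism, not through Wielandt. This also makes your Step~2 unnecessary: rather than perturbing $\beta(2s)$ and its eigenprojection into the strip, the paper just bounds $\sum_m \|A_{2s}^m\|$ directly from the iterate estimate to get analyticity and the $O(|t|^{2+\rho})$ growth. So: keep your plan, but replace your Step~1 argument by an appeal to Dolgopyat (or reproduce his cancellation argument in the matrix setting, which is not the two-loop phase comparison you wrote).
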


Note that, since the lengths of the closed geodesics in $G$ are not contained in a discrete 
subgroup of $\mathbb R$ if and only if there are two closed geodesics the ratio of whose lengths
is irrational, the hypothesis of Theorem \ref{thm: polyerror} is strictly
stronger that that of Theorem \ref{thm: mainresult}.

The crucial new ingredient is to obtain a bound on powers of the matrix $A_s$, for $\mathrm{Re}(s)$
close to $h$ and $\mathrm{Im}(s)$ away from zero.
To do this, we use the work of Dolgopyat  \cite{dolgopyat1998prevalence} on transfer operators,
where it is a key ingredient
in studying the mixing rate of flows
(cf.\ also \cite{pollicottsharp2001error}).
Since $r : \Sigma \to \mathbb R$ is locally constant, the transfer operator $L_{-sr}$, defined pointwise
by
\[
(L_{-sr}\psi)(x) = \sum_{\sigma y=x} e^{-sr(y)} \psi(y),
\]
 acts on
the space of complex valued H\"older continuous functions with exponent $\alpha$, for any $\alpha>0$. 
For definiteness, we will consider the action on the Banach space of Lipschitz functions,
$L_{-sr} :  C_{\mathrm{Lip}}(\Sigma,\mathbb C) \to C_{\mathrm{Lip}}(\Sigma,\mathbb C)$,
with the norm $\|f\| = \|f\|_\infty +|f|_{\mathrm{Lip}}$, where
\[
|f|_{\mathrm{Lip}}= \sup_{x \neq y} \frac{|f(x)-f(y)|}{d(x,y)}.
\]
Let $V_{N-1} \subset C_{\mathrm{Lip}}(\Sigma,\mathbb C)$ denote the finite dimensional
subspace consisting of locally constant functions depending on the first $N-1$ coordinates.
This has dimension $D := \# W_{N-1}'$.
Then $L_{-sr} : V_{N-1} \to V_{N-1}$ and $\|L_{-sr}|_{V_{N-1}}\| \leq \|L_{-sr}\|$.
Furthermore, the restriction of $L_{-sr}$ to $V_{N-1}$ can be identified with the action of $B_{s}$
on $\mathbb C^D$. We can therefore use the 
results of \cite{dolgopyat1998prevalence} on bounding the norm
of iterates of transfer operators to give a bound on the norm of the 
iterates of $B_s$ and hence $A_s$.

For the remainder of this section, it will be convenient to write $s=\varsigma+it$.

\begin{proposition} \label{prop: Diophantine1}
Under the hypotheses of Theorem \ref{thm: polyerror}, there exist constants
 \(C_1,C_2>0\), $\tau>0$ and \(t_1 \ge 1\) such that when \(|t|\ge t_1\) and \(m\ge 1\),
	\begin{equation*}
		\| A_{\varsigma+it}^{2\nu m} \|\le C_1|t|\beta(\varsigma)^{2\nu m} 
		\left(1-|t|^{-\tau}\right)^{m-1},
	\end{equation*}
where \(\nu =\lfloor C_2\log|t| \rfloor\).
\end{proposition}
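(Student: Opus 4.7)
The plan is to reduce the matrix-norm bound to a statement about iterates of the transfer operator $L_{-(\varsigma+it)r}$ on the Lipschitz subspace $V_{N-1}$, and then invoke the Dolgopyat-type estimates referenced in the paragraph preceding the proposition. First I would use the identification, already noted in the excerpt, between the restriction of $L_{-sr}$ to $V_{N-1}$ and the action of $B_s$ on $\mathbb{C}^D$ with $D=\#W'_{N-1}$. Since $A_s$ and $B_s$ have the same non-zero spectrum, and $A_s$ differs from $B_s$ only by additional zero eigenvalues on vectors indexed by the shorter reduced words, $\|A_s^n\|$ and $\|B_s^n\|$ are comparable up to an absolute multiplicative constant for $n\geq 1$. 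Thus it suffices to prove the stated bound for $\|L_{-(\varsigma+it)r}^{2\nu m}|_{V_{N-1}}\|$.

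Next I would verify that the Diophantine hypothesis on $l(\gamma)/l(\gamma')$ provides the uniform non-integrability (UNI) condition required by Dolgopyat's method. By Lemma \ref{lem: geodesiclength}, the ergodic sums $r^n(x)$ over periodic points $\sigma^n x = x$ realise the lengths of closed geodesics in $G$; in particular they realise both $l(\gamma)$ and $l(\gamma')$, possibly after passing to commensurate iterates. A Diophantine lower bound on $|q(l(\gamma)/l(\gamma')) - p|$ then translates, in a standard way, into a quantitative oscillation estimate on the phases $e^{it r^n(x)}$ over suitable pairs of periodic points, uniformly in $|t|$ large. This is exactly the input required by Dolgopyat's cancellation lemma in the locally constant setting.

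With UNI in hand, I would apply the main estimate of \cite{dolgopyat1998prevalence}, adapted to this subshift of finite type and locally constant weight $r$, in the form used in \cite{pollicottsharp2001error}. The conclusion is that, for $|t|\geq t_1$ and $\nu = \lfloor C_2\log|t|\rfloor$, the normalised operator $\beta(\varsigma)^{-1} L_{-(\varsigma+it)r}$ contracts a suitable invariant cone of positive Lipschitz functions by a factor of order $(1-|t|^{-\tau})^{1/2}$ per block of $2\nu$ iterations; iterating $m$ times and absorbing the initial $|t|$-dependent bound on the Lipschitz seminorm (which arises from differentiating $e^{-it r^n}$ and is linear in $|t|$) yields
\[
\|L_{-(\varsigma+it)r}^{2\nu m}|_{V_{N-1}}\| \leq C_1 |t| \beta(\varsigma)^{2\nu m} (1-|t|^{-\tau})^{m-1}.
\]
Combining this with the $A_s$--$B_s$ comparison above gives the proposition.

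The main obstacle I would expect is the careful verification that the Diophantine condition, which is imposed on only two closed geodesics in the metric graph, really delivers the UNI condition uniformly on the symbolic side: one needs both $l(\gamma)$ and $l(\gamma')$ to appear as values of $r^{n}$ at comparable lengths $n$, and one needs uniform (in $x$) control of the resulting cancellation among exponential phases. Once this is settled, the remainder is an essentially routine transcription of Dolgopyat's estimates, somewhat simplified by the fact that we are working on the finite-dimensional invariant subspace $V_{N-1}$ where the transfer operator acts as an ordinary matrix.
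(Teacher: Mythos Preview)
Your proposal is correct and follows essentially the same route as the paper. In fact the paper does not give a standalone proof of this proposition at all: the paragraph preceding it sets up exactly the identification you describe (the transfer operator $L_{-sr}$ restricted to the finite-dimensional subspace $V_{N-1}$ coincides with $B_s$, hence controls $A_s$), and the proposition is then simply stated as a consequence of Dolgopyat's bounds in \cite{dolgopyat1998prevalence}, with \cite{pollicottsharp2001error} as a guide for the form of the estimate. Your sketch supplies a reasonable amount of the detail the paper leaves implicit; one small caveat is that ``same non-zero spectrum'' is not by itself enough to compare $\|A_s^n\|$ and $\|B_s^n\|$ --- rather, use the block structure of $A_s$ (the sub-block indexed by $W_{N-2}$ is nilpotent of index at most $N-1$), which gives the comparison directly.
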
 

We will also use the elementary inequality (cf.\ Proposition 2.1 of \cite{PP}): 

\begin{lemma} \label{lem: basicinequality}
There exists a constant $D_1>0$, independent of $t$ and uniform in $\varsigma$,
such that, for all $n >0$, we have
\begin{align*}
\|A_{\varsigma+it}^n\| \leq  \beta(\varsigma)^n \left(D_1|t| + 2^{-n}\right).
\end{align*}
\end{lemma}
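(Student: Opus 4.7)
The plan is to identify $A_{\varsigma+it}$ with the restriction of the transfer operator $L_{-sr}$ to the finite-dimensional subspace $V_{N-1}$ and to establish a Lasota--Yorke type bound for $L_{-sr}^n$ acting on $C_{\mathrm{Lip}}(\Sigma,\mathbb{C})$. The operator norm of $A_{\varsigma+it}^n$ on $\mathbb{C}^D$ (with the norm inherited from $V_{N-1}$) is then controlled by that of $L_{-sr}^n$ on $C_{\mathrm{Lip}}(\Sigma,\mathbb{C})$, with the universal constants absorbed into $D_1$.

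For $f \in C_{\mathrm{Lip}}(\Sigma,\mathbb{C})$ I would estimate $\|L_{-sr}^n f\|_\infty$ and $|L_{-sr}^n f|_{\mathrm{Lip}}$ separately. The supremum bound is immediate: $|L_{-sr}^n f(x)| \leq (L_{-\varsigma r}^n 1)(x)\,\|f\|_\infty$, and the Perron--Frobenius eigendata of $B_\varsigma$ give $(L_{-\varsigma r}^n 1)(x) = O(\beta(\varsigma)^n)$, uniformly in $x$ and in $\varsigma$ in a neighbourhood of $h$. For the Lipschitz seminorm, I would pair each preimage $y$ of $x$ under $\sigma^n$ with the unique preimage $y'$ of $x'$ lying in the same $n$-cylinder, so that $d(y,y')=2^{-n}d(x,x')$, and split
\[
e^{-sr^n(y)}f(y) - e^{-sr^n(y')}f(y') = e^{-sr^n(y)}(f(y)-f(y')) + f(y')\bigl(e^{-sr^n(y)}-e^{-sr^n(y')}\bigr).
\]
Summing over preimages, the first piece is bounded by $O\bigl(2^{-n}\beta(\varsigma)^n |f|_{\mathrm{Lip}} d(x,x')\bigr)$, using $|f(y)-f(y')|\leq 2^{-n}|f|_{\mathrm{Lip}} d(x,x')$, while the second, via $|e^a-e^b|\leq \max(e^{\mathrm{Re}\,a}, e^{\mathrm{Re}\,b})|a-b|$ together with the comparison $|r^n(y)-r^n(y')|=O(d(x,x'))$, is bounded by $O\bigl(|t|\,\beta(\varsigma)^n \|f\|_\infty d(x,x')\bigr)$. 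Combining and majorising $\|f\|_\infty$ and $|f|_{\mathrm{Lip}}$ by $\|f\|$ produces a bound of the form $\beta(\varsigma)^n(D_1|t|+2^{-n})\|f\|$ in the relevant range of $t$, which is the claimed inequality (the applications concern $|t|\geq 1$, where lower-order terms are absorbed into $D_1|t|$).

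The main technical ingredient is the estimate $|r^n(y)-r^n(y')|=O(d(x,x'))$ with constant uniform in $n$. Because $r$ is locally constant on cylinders of depth $N$, in the telescoping sum $r^n(y)-r^n(y')=\sum_{k=0}^{n-1}\bigl(r(\sigma^k y)-r(\sigma^k y')\bigr)$ every term with $k\leq n-N$ vanishes (since $\sigma^k y$ and $\sigma^k y'$ then agree in their first $N$ coordinates), and each of the remaining $N-1$ terms is $O(2^{k-n}d(x,x'))$, summing to $O(d(x,x'))$. A naive bound using only $\|r\|_\infty$ would give only $O(1)$, which is too weak to produce the $d(x,x')$ factor needed for a Lipschitz estimate, so this cancellation together with the expansion factor $2$ of $\sigma$ is the crucial point.
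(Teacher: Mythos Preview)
Your argument is correct and gives essentially the same estimate, but the route differs from the paper's. The paper first normalises the transfer operator: it takes the Ruelle--Perron--Frobenius eigenfunction $\phi$ of $L_{-\varsigma r}$, sets $g=-\varsigma r+\log\phi-\log\phi\circ\sigma-\log\beta(\varsigma)$ so that $L_g1=1$, and then invokes Proposition~2.1 of \cite{PP} as a black box to obtain $|L_{g-itr}^n\psi|_{\mathrm{Lip}}\le D_0|t|\|\psi\|_\infty+2^{-n}|\psi|_{\mathrm{Lip}}$ and $\|L_{g-itr}^n\psi\|_\infty\le\|\psi\|_\infty$; the factor $\beta(\varsigma)^n$ then appears cleanly from the conjugacy $L_{-(\varsigma+it)r}=\beta(\varsigma)\Delta_\phi L_{g-itr}\Delta_\phi^{-1}$. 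You instead carry out the Lasota--Yorke estimate directly on the unnormalised operator $L_{-sr}$, which is more self-contained but requires the auxiliary bound $(L_{-\varsigma r}^n1)(x)=O(\beta(\varsigma)^n)$ with a uniform constant---and that constant is precisely $\max\phi/\min\phi$, so the eigenfunction is still implicitly present. One small point: in your second piece the mean-value inequality produces a factor $|s|$ rather than $|t|$, so you get $O((1+|t|)\beta(\varsigma)^n)$; the paper's normalisation isolates the imaginary part $-itr$ and therefore yields the $|t|$ factor directly. As you note, this discrepancy is harmless because the lemma is only applied for $|t|\ge t_1\ge1$, where the additive constant is absorbed into $D_1|t|$ (indeed the paper's own combination of the sup-norm and Lipschitz bounds has the same constant-term issue).
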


\begin{proof}
Let $\phi$ denote the strictly positive eigenfunction of the operator $L_{-\varsigma r}$ associated to
the eigenvalue $\beta(\varsigma)$, guaranteed by the Ruelle-Perron-Frobenius Theorem
for this operator (see \cite{PP}, Theorem 2.2). Let 
\[
g = -\varsigma r + \log \phi - \log \phi \circ \sigma -\log \beta(\varsigma).
\]
Then $L_g1=1$ and
it follows from Proposition 2.1 of \cite{PP} that
\[
|L_{g-itr}^n\psi|_{\mathrm{Lip}} \leq D_0|t| \|\psi\|_\infty + 2^{-n}|\psi|_{\mathrm{Lip}},
\]
for some $D_0>0$ independent of $t$ and uniform in $\varsigma$, and 
\[
\|L_{g-itr}^n\psi\|_\infty \leq \|\psi\|_\infty.
\]
(The only difference from the statement given in \cite{PP} is the appearance of the term $|t|$ 
and the uniformity of $D_0$ but this 
follows from an inspection of the proof.)
In particular, we have
\[
 \|L_{g-itr}^n\psi\| \leq (D_1|t| + 2^{-n})\|\psi\|.
 \]
 for some $D_1>0$ independent of $t$ and uniform in $\sigma$. Since
 \[
 L_{-(\varsigma+it)r} = \beta(\varsigma) \Delta_\phi L_{g-itr} \Delta_\phi^{-1},
 \]
 where $\Delta_\phi$ is the multiplication operator $\Delta_\phi(\psi)=\phi\psi$,
 the result follows.
\end{proof}

Combining Proposition \ref{prop: Diophantine1} and Lemma \ref{lem: basicinequality}
gives the following bound on \(\|A_s^n\|\) for all \(n\in\mathbb N\). 

\begin{proposition} \label{prop: allnbound}
Let \(n=2\nu m +l\) where \(m=\left\lfloor \tfrac{n}{2\nu} \right\rfloor\) and \(l\in\{0,\ldots, 2\nu -1\}\), then,
 for $|t|\geq t_1$
	\begin{equation*} 
	 \|A_{\varsigma+it}^n\| \le 
	 C_3|t|^2 \beta(\varsigma)^{n}(1- |t|^{-\tau})^{m-1},
	\end{equation*}
for some $C_3>0$ independent of $t$ and uniform in $\varsigma$.
\end{proposition}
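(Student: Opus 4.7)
The plan is to decompose the exponent $n = 2\nu m + l$ into a block of length $2\nu m$ (to which the Dolgopyat-type estimate of Proposition \ref{prop: Diophantine1} applies) and a remainder of length $l < 2\nu$ (which we control with the elementary Lipschitz bound of Lemma \ref{lem: basicinequality}). Writing $A_{\varsigma+it}^n = A_{\varsigma+it}^{2\nu m} A_{\varsigma+it}^l$ and using submultiplicativity of the operator norm, it suffices to combine the two available bounds.

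First I would apply Proposition \ref{prop: Diophantine1} to the factor $A_{\varsigma+it}^{2\nu m}$, giving
\[
\|A_{\varsigma+it}^{2\nu m}\| \leq C_1 |t| \beta(\varsigma)^{2\nu m} \left(1-|t|^{-\tau}\right)^{m-1}
\]
for all $m \geq 1$ and $|t| \geq t_1$. Next I would apply Lemma \ref{lem: basicinequality} to the short factor $A_{\varsigma+it}^l$:
\[
\|A_{\varsigma+it}^l\| \leq \beta(\varsigma)^l \left(D_1|t| + 2^{-l}\right) \leq (D_1+1)|t|\beta(\varsigma)^l,
\]
where the second inequality uses $2^{-l} \leq 1 \leq |t|$, valid since $|t| \geq t_1 \geq 1$. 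Multiplying the two estimates and using $\beta(\varsigma)^{2\nu m}\beta(\varsigma)^l = \beta(\varsigma)^n$ gives
\[
\|A_{\varsigma+it}^n\| \leq C_1(D_1+1)|t|^2 \beta(\varsigma)^n \left(1-|t|^{-\tau}\right)^{m-1},
\]
which is the claimed bound with $C_3 = C_1(D_1+1)$.

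The only case not yet addressed is the degenerate one in which $m=0$, i.e. $n < 2\nu$, where the factor $(1-|t|^{-\tau})^{m-1} = (1-|t|^{-\tau})^{-1}$ exceeds $1$ and the product decomposition is trivial. Here I would apply Lemma \ref{lem: basicinequality} directly to obtain $\|A_{\varsigma+it}^n\| \leq (D_1+1)|t|\beta(\varsigma)^n$, which is bounded by $C_3|t|^2\beta(\varsigma)^n(1-|t|^{-\tau})^{-1}$ since $|t| \geq 1$ and $(1-|t|^{-\tau})^{-1} \geq 1$. Enlarging $C_3$ if necessary absorbs this case.

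There is essentially no technical obstacle: the uniformity of $D_1$ in $\varsigma$ built into Lemma \ref{lem: basicinequality}, together with the uniformity already present in Proposition \ref{prop: Diophantine1}, means the constants propagate cleanly and the power of $|t|$ accumulates to $|t|^2$ (one factor from each estimate). The only point requiring attention is to verify that the corresponding $2^{-l}$ term in the short-block estimate is dominated by $|t|$ so that it does not spoil the polynomial weight; this is immediate from $|t| \geq t_1 \geq 1$.
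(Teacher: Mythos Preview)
Your proof is correct and follows exactly the approach the paper intends: the paper itself offers no detailed argument, merely stating that the bound follows by ``combining Proposition~\ref{prop: Diophantine1} and Lemma~\ref{lem: basicinequality}'', and your decomposition $A_{\varsigma+it}^n = A_{\varsigma+it}^{2\nu m} A_{\varsigma+it}^l$ together with submultiplicativity is precisely how this combination is carried out. Your handling of the degenerate case $m=0$ and the absorption of $2^{-l}$ into $|t|$ are both clean; the one trivial case you do not mention explicitly is $l=0$, where $A_{\varsigma+it}^l$ is the identity and Proposition~\ref{prop: Diophantine1} alone already gives the bound.
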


We use this to study the analyticity of \(\eta_{\mathfrak{C}}(s)\) (using a simpler version of the
arguments in \cite{pollicottsharp2001error}).
\begin{proposition} 
There exist constants \(\rho>0\) and \(t_2 \ge t_1\) such that 
\(\eta_\mathfrak{C}(s)\) has an analytic extension to 
the region
 	\begin{equation*}
	 \mathcal{R}(\rho) = \{ \varsigma+it \in\mathbb C \colon 2\varsigma> h - 1/|2t|^\rho, 
	 |t|\ge t_2 \},
	\end{equation*}
where it satisfies the bound \(|\eta_\mathfrak{C}(\varsigma+it)|= O(|t|^{2+\rho})\).
\end{proposition}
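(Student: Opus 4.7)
The plan is to start from the representation of $\eta_{\mathfrak C}(s)$ derived in Section 4, namely
\[
\eta_{\mathfrak C}(s) = \sum_{g \in \mathfrak C_k} \sum_{y \in W'_{N-1}(g)} e^{-s(L(y^{-1}gy)-2L(y))} \sum_{m=N-1}^\infty \mathsf{e}_y^T A_{2s}^m \mathsf{e}_\emptyset + \phi(s),
\]
with $\phi$ entire and the outer sum finite. Since the exponential prefactors are entire and uniformly bounded in $t$ for $\varsigma$ in any bounded strip, both the analytic extension and the growth estimate reduce to the corresponding statements for $F_y(s) := \sum_{m=N-1}^\infty \mathsf{e}_y^T A_{2s}^m \mathsf{e}_\emptyset$, uniformly over the (finite collection of) admissible $y$.

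For the key spectral input I would feed Proposition \ref{prop: allnbound}, applied with $2s$ in place of $s$, into this series. It gives
\[
\|A_{2\varsigma+2it}^m\| \leq C_3 |2t|^2 \beta(2\varsigma)^m \bigl(1-|2t|^{-\tau}\bigr)^{\lfloor m/(2\nu)\rfloor - 1},
\]
so the geometric sum converges provided $\beta(2\varsigma)(1-|2t|^{-\tau})^{1/(2\nu)} < 1$. Taking logarithms and using the Taylor expansion $\log\beta(2\varsigma) = |\beta'(h)|(h-2\varsigma) + O((h-2\varsigma)^2)$ (with $\beta'(h)<0$ from Lemma \ref{lem: beta_decreasing}), together with $\nu = \lfloor C_2 \log|2t|\rfloor$, this inequality reduces to requiring $h-2\varsigma$ to be strictly smaller than a constant multiple of $|2t|^{-\tau}/\log|t|$. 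Fixing any $\rho > \tau$ and taking $t_2$ large then ensures $|2t|^{-\rho}$ lies well below this threshold whenever $|t| \geq t_2$, so the series converges throughout $\mathcal R(\rho)$.

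The same expansion also gives the lower bound
\[
1-\beta(2\varsigma)\bigl(1-|2t|^{-\tau}\bigr)^{1/(2\nu)} \gg \frac{|2t|^{-\tau}}{\log|t|}
\]
uniformly on $\mathcal R(\rho)$. Summing the geometric bound,
\[
|F_y(s)| \leq \frac{C_3 |2t|^2 \beta(2\varsigma)^{N-1}}{1-\beta(2\varsigma)(1-|2t|^{-\tau})^{1/(2\nu)}} = O\bigl(|t|^{2+\tau}\log|t|\bigr) = O\bigl(|t|^{2+\rho}\bigr),
\]
since $\rho > \tau$. Uniform convergence of the series on compact subsets of $\mathcal R(\rho)$ upgrades $F_y$ (and hence $\eta_{\mathfrak C}$) to an analytic function on the region, establishing both the extension and the stated bound.

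The main technical obstacle is the balance between the exponential-in-$m$ growth $\beta(2\varsigma)^m$ (forced by $\varsigma < h/2$) and the decay $(1-|2t|^{-\tau})^{m/(2\nu)}$ coming from Proposition \ref{prop: allnbound}. The logarithmic factor in $\nu$ forces the permissible slack $h-2\varsigma$ to be $o(|2t|^{-\tau}/\log|t|)$ rather than merely $o(|2t|^{-\tau})$, which is precisely why one must take $\rho$ strictly larger than $\tau$; the spare $|t|^{\rho-\tau}$ one gains then absorbs the $\log|t|$ factor in the denominator. Making the Taylor expansion of $\log\beta$ uniform in $\varsigma$ is a secondary bookkeeping issue, handled by shrinking the $\varsigma$-neighbourhood of $h/2$ and enlarging $t_2$ as required.
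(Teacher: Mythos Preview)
Your proposal is correct and follows essentially the same route as the paper: both arguments reduce $\eta_{\mathfrak C}(s)$ to a geometric sum of $\|A_{2s}^m\|$, invoke Proposition \ref{prop: allnbound} with $2s$, choose $\rho>\tau$, use the Taylor expansion of $\beta$ near $h$ to guarantee $\beta(2\varsigma)(1-|2t|^{-\tau})^{1/(2\nu)}<1$ on $\mathcal R(\rho)$, and then sum to obtain the $O(|t|^{2+\rho})$ bound. Your write-up is in fact more explicit than the paper's about the role of the $\log|t|$ in $\nu$ and the resulting lower bound on the denominator, but the underlying argument is the same.
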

 \begin{proof} 
 Suppose that \(\varsigma+it\in\mathbb C\) satisfies \(2\varsigma> h - |2t|^{-\rho}\) and 
 \(|2t|\ge t_1\).  Take $\rho >\tau$ (with \(\tau\) the constant from Proposition \ref{prop: Diophantine1}). Since
	\begin{equation*}
	  \beta(2\varsigma) = 1 + \beta'(h)(2\varsigma - h) 
	  + O((2\varsigma-h)^2)
	\end{equation*}
with \(\beta'(h)<0\), for \(|t|\ge t_2\), 
where \(t_2\ge t_1\), we have
$( 1 -|2 t|^{-\tau} )^{1/2\nu} < \beta(2\varsigma)^{-1}$.

We can estimate
\(|\eta_\mathfrak{C}(s)| \le M \sum_{m=1}^\infty | A_{2s}^m 
\mathsf{e}_\emptyset | \),
for some $M>0$. Thus, for \(\varsigma+it\in\mathcal{R}(\rho)\), we have, by Proposition \ref{prop: allnbound},
	\begin{align*}
	 \sum_{m=1}^\infty | A_{2(\varsigma+it)}^m \mathsf{e}_{\emptyset} | 
	 &\le \sum_{m=1}^\infty C_3 |2t|^2 \beta(2\varsigma)^m
	 \left( 1 - |2t|^{-\tau} \right)^{\left\lfloor m/2\nu  \right\rfloor -1} \\
		&\le \frac{C_3 |2t|^2 \beta(2\varsigma) }{ \left( 1 - |2t|^{-\tau} \right)^{2-2/\nu} 
		\bigl( 1- \beta(2\varsigma) \left( 1 - |2t|^{-\tau} \right)^{1/2\nu} \bigr)  } \\
		&\le \frac{C_4 |2t|^{2}}{1- \beta(2\varsigma) \left( 1- |2t|^{-\tau} \right)^{1/2\nu} } =O(|t|^{2+\rho}),
	\end{align*}
which shows that $\eta_{\mathfrak C}(s)$ is analytic in the desired region and gives the 
bound.
 \end{proof}

Let \(\xi_\mathfrak{C}(s)\) be the normalised generating function given by
	\begin{equation*}
	 \xi_\mathfrak{C}(s) = \eta_{\mathfrak{C}}(sh/2) = \sum_{x\in\mathfrak{C}} e^{-shL(x)/2}.
	\end{equation*}
	
We immediately deduce that
 \(\xi_\mathfrak{C}(s)\) is analytic in the half-plane \(\mathrm{Re}(s)>1\),
 has an analytic extension to a neighbourhood of
	  \(\mathrm{Re}(s)\ge 1\) apart from the simple pole at \(s=1\), which has positive residue,
	  and, furthermore, that
there exist positive constants \(\rho\) and \(t_3=2t_2/h\) such that \(\xi_\mathfrak{C}(s)\) 
has an analytic extension to
		\begin{equation*}
		 \mathcal{R}_\xi(\rho) = \left\{ \varsigma+it \colon \varsigma
		 > 1 - \frac{1}{h^{\rho+1}|t|^\rho},\ |t|> t_3\right\};\ \text{and}
		\end{equation*}
		where it satisfies
	\(|\xi_\mathfrak{C}(s)| = O(|t|^{2+\rho})\).


Let us introduce a normalised counting function 
\[
\psi_0(T) = \sum_{e^{hL(x)/2}\le T} 1.
\]    
Adapting the arguments of \cite{pollicottsharp2001error},
we will establish an error term for \(\psi_0(T)\), 
from which Theorem \ref{thm: polyerror} will follow since
$\psi_0(e^{hT/2}) = N_{\mathfrak C}(T)$.
We introduce the following family of auxiliary functions.  
Let \(\psi_1(T) = \int_1^T \psi_0(u) \, d{u}\) and continue inductively so that
	\begin{equation*}
	 \psi_k(T) = \int_1^T \psi_{k-1}(u) \, d{u} = \frac{1}{k!} \sum_{e^{hL(x)/2}\le T} (T- e^{hL(x)/2})^k.
	\end{equation*}

We use the following identity (\cite{ingham1932distribution}, Theorem B, page 31) to connect the functions \(\xi_\mathfrak{C}(s)\) and \(\psi_k(T)\).  If \(k\) is a positive integer and \(d>0\) we have
	\begin{equation*}
	 \frac{1}{2\pi i } \int_{d-i  \infty}^{d+i  \infty} \frac{y^s}{s(s+1)\cdots (s+k)} d{s} = \begin{cases} 0 & \text{if } 0<y<1, \\ \tfrac{1}{k!} (1-1/y)^k & \text{if } y\ge 1. \end{cases}
	\end{equation*}
This gives us the following.

\begin{lemma}
	For \(d>1\) we may write
		\begin{equation*}
		 \psi_k(T) = \frac{1}{2\pi i } \int_{d-i  \infty}^{d+i \infty}  
		 \frac{\xi_\mathfrak{C}(s) T^{s+k}}{s(s+1)\cdots (s+k)} d{s}.
		\end{equation*}
\end{lemma}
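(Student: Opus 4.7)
The plan is to substitute the Dirichlet series representation of $\xi_{\mathfrak C}$ into the integral, interchange summation and integration, and then apply the Ingham identity termwise. Explicitly, for $\mathrm{Re}(s) = d > 1$ we write
\[
\frac{\xi_{\mathfrak C}(s)\, T^{s+k}}{s(s+1)\cdots(s+k)} = T^k \sum_{x \in \mathfrak C} \frac{(T e^{-hL(x)/2})^s}{s(s+1)\cdots(s+k)},
\]
so that, if the interchange is legitimate,
\[
\frac{1}{2\pi i}\int_{d-i\infty}^{d+i\infty} \frac{\xi_{\mathfrak C}(s)\, T^{s+k}}{s(s+1)\cdots(s+k)}\, ds
= T^k \sum_{x \in \mathfrak C} \frac{1}{2\pi i}\int_{d-i\infty}^{d+i\infty} \frac{y_x^s}{s(s+1)\cdots(s+k)}\, ds,
\]
with $y_x = T e^{-hL(x)/2}$.

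Next I would apply the Ingham identity to each inner integral. Observe that $y_x \geq 1$ if and only if $e^{hL(x)/2} \leq T$, so the cited identity gives
\[
\frac{1}{2\pi i}\int_{d-i\infty}^{d+i\infty} \frac{y_x^s}{s(s+1)\cdots(s+k)}\, ds
= \begin{cases} \dfrac{1}{k!}\left(1 - e^{hL(x)/2}/T\right)^k & \text{if } e^{hL(x)/2} \leq T, \\ 0 & \text{otherwise}. \end{cases}
\]
Multiplying by $T^k$ and summing yields exactly
\[
\frac{1}{k!}\sum_{e^{hL(x)/2} \leq T}\left(T - e^{hL(x)/2}\right)^k = \psi_k(T),
\]
by the definition of $\psi_k$.

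The only non-routine point, and the one place care is needed, is justifying the interchange of summation and integration. For this I would verify absolute convergence of the double integral/sum. On the line $\mathrm{Re}(s) = d$ the integrand is bounded in modulus by
\[
\frac{T^{d+k}\, e^{-dhL(x)/2}}{|s(s+1)\cdots(s+k)|},
\]
and since $k \geq 1$ the factor $|s(s+1)\cdots(s+k)|^{-1}$ decays like $|t|^{-(k+1)}$ as $|t|\to\infty$, so it is integrable along the vertical line. Meanwhile, because $d > 1$ we have $\mathrm{Re}(sh/2) = dh/2 > h/2$, which lies in the region of absolute convergence of $\xi_{\mathfrak C}(s)$ established in Proposition \ref{prop: countingetaanalytic}; thus $\sum_{x \in \mathfrak C} e^{-dhL(x)/2} = \xi_{\mathfrak C}(d) < \infty$. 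Fubini's theorem therefore applies and the interchange is legitimate, completing the proof.
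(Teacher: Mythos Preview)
Your argument is correct and is exactly the approach the paper intends: the paper does not write out a proof but simply states the Ingham identity and says ``This gives us the following,'' leaving implicit the termwise application to the Dirichlet series $\xi_{\mathfrak C}(s)=\sum_{x\in\mathfrak C} e^{-shL(x)/2}$. You have supplied the details the paper omits, including the Fubini justification, and there is nothing to add.
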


We briefly outline the method to approximate the integral for \(\psi_k(T)\). First, compare the 
integral for \(\psi_k(T)\) to the truncated integral on the line segment \([d-i  R, d+ i  R]\), 
where \(R=(\log T)^\varepsilon\) and \(0< \varepsilon< 1/\rho\).  Let us choose \(d=1+1/\log T\) 
and then since \(\xi_\mathfrak{C}(d) = O((d-1)^{-1})\), we deduce
	\begin{align*}
		\left|\psi_k(T) - \frac{1}{2\pi i } \int_{d-i  R}^{d+i  R} 
		 \frac{\xi_\mathfrak{C}(s) T^{s+k}}{s(s+1)\cdots (s+k)} d{s} \right| 
		= O \left( \frac{(\log T) T^{k+1}}{R^{k+1}} \right) 
		= O\left( \frac{T}{(\log T)^{k\varepsilon}} \right).
	\end{align*}

We evaluate the truncated integral using Cauchy's Residue Theorem.  Consider a closed contour 
\(\Gamma \cup [d-i  R, d+i  R]\). Here $\Gamma$ is the union of the 
line segments \([d+i  R, c(R) + i  R]\), \([c(R)-i  R, d-i  R]\) and \([c(R)+i  R, c(R)-i  R]\),
where 
\[c(R) = 1 - \frac{1}{2h^{\rho+1}R^\rho},
\] 
so that \(\Gamma\) lies 
in \(\mathcal{R}_\xi(\rho)\).  
Note that \(\Gamma \cup [d-i  R, d+i  R]\) encloses the simple pole of \(\xi_\mathfrak{C}(s)\) 
at $s=1$. Cauchy's Residue Theorem gives that, for some $c>0$,
	\begin{equation*}
	 \frac{1}{2\pi i } \int_{d-i  R}^{d+i  R} \frac{\xi_\mathfrak{C}(s) T^{s+k}}{s(s+1)\cdots (s+k)} d{s} = \frac{c T^{k+1}}{(k+1)!} + \frac{1}{2\pi i } \int_\Gamma \frac{\xi_\mathfrak{C}(s) T^{s+k}}{s(s+1)\cdots (s+k)} d{s}.
	\end{equation*}

We consider the contribution made by each of the line segments in \(\Gamma\).  First, integrating over the interval \([d+i  R, c(R) +i  R]\), we have
	\begin{align*}
	 \left| \int_{d+i  R}^{c(R)+i  R} \frac{\xi_\mathfrak{C}(s) T^{s+k}}{s(s+1)\cdots (s+k)} d{s} \right| 
	 &\le \frac{T^{d+k}}{R^{k+1}} \left| \int_{d+i  R}^{c(R)+i  R} \xi_\mathfrak{C}(s) d{s} \right| 
	 \\
	 &= O\left(\frac{T^{d+k}}{(\log T)^{(k-\rho-1)\varepsilon}} \right)
	\end{align*}
and similarly, for \([c(R) -i  R, d-i  R ]\), we have
	\begin{equation*}
	 \left| \int_{c(R) -i  R}^{d-i  R} \frac{\xi_\mathfrak{C}(s) T^{s+k}}{s(s+1)\cdots (s+k)} d{s} \right| = O\left(\frac{T^{d+k}}{(\log T)^{(k-\rho-1)\varepsilon}} \right).
	\end{equation*}

We estimate the modulus of the integral along \([c(R)+i  R, c(R) - i  R]\) by
	\begin{align*}
	T^{c(R)+k} \left| \int_{c(R)+i  R}^{c(R)-i  R} \frac{\xi_\mathfrak{C}(s)}{s(s+1)\cdots (s+k)} d{s} \right| &= O\left( T^{c(R)+k} \int_1^R t^{1+\rho-k} d{t} \right) \\
	&= O(T^{c(R)+k} R^{2+\rho-k}),
	\end{align*}
which means for any positive \(\gamma\) we have
	\begin{equation*}
	 T^{c(R)+k} R^{2+\rho-k} =
	T^{k+1} e^{-\frac{\log T}{2h^{\rho+1}(\log T)^{\varepsilon\rho}}}(\log T)^{(2+\rho-k)\varepsilon} = O(T^{k+1}(\log T)^{-\gamma}).
	\end{equation*}

Together the integral estimates give us an error term for \(\psi_k(T)\):
	\begin{equation*}
	 \psi_k(T) = c' T^{k+1} + O \left( \frac{T^{k+1}}{(\log T)^{(k-\rho-1)\varepsilon}} \right)
	\end{equation*}
where \(c'>0\). Then repeatedly applying the inequality
	\begin{equation*}
	 \psi_{j-1}(T- \Delta T)\Delta T \le \psi_{j}(T) - \psi_j(T- \Delta T) \le \psi_{j-1}(T) \Delta T,
	\end{equation*}
where 
\[
\Delta T = T(\log T)^{-(k-\rho-1)2^{j-k-1}\varepsilon},
\]
 we obtain 
 \[
 \psi_0(T) = CT + O \left( T(\log T)^{-\delta} \right),
 \]
  where \(C,\delta>0\). Thus we have the
error term  $N_\mathfrak{C}(T) = Ce^{hT/2} + O(e^{hT/2} T^{-\delta})$,
completing the proof of Theorem \ref{thm: polyerror}.

\end{document}